\def\NZQ{\mathbb}               
\def\QQ{{\NZQ Q}}
\def\ZZ{{\NZQ Z}}
\def\RR{{\NZQ R}}
\def\frk{\mathfrak}               
\def\Phi{{\frk N}}
\def\ab{{\bold a}}
\def\xb{{\bold x}}
\def\opn#1#2{\def#1{\operatorname{#2}}} 
\opn\chara{char} \opn\length{\ell} \opn\pd{pd} \opn\rk{rk}
\opn\projdim{proj\,dim} \opn\injdim{inj\,dim} \opn\rank{rank}
\opn\depth{depth} \opn\grade{grade} \opn\height{height}
\opn\size{size}
\opn\embdim{emb\,dim} \opn\codim{codim}
\opn\Tr{Tr} \opn\bigrank{big\,rank}
\opn\superheight{superheight}\opn\lcm{lcm}
\opn\trdeg{tr\,deg}
\opn\reg{reg} \opn\lreg{lreg} \opn\ini{in} \opn\lpd{lpd}
\opn\size{size}\opn{\mult}{mult}
\opn{\Cl}{Cl}
\opn\div{div} \opn\Div{Div} \opn\cl{cl} \opn\Cl{Cl}
\opn\Spec{Spec} \opn\Supp{Supp} \opn\supp{supp} \opn\Sing{Sing}
\opn\Ass{Ass} \opn\Min{Min} \opn\cl{cl}
\opn\Ann{Ann} \opn\Rad{Rad} \opn\Soc{Soc}
\opn\Syz{Syz} \opn\Im{Im} \opn\Ker{Ker} \opn\Coker{Coker}
\opn\Am{Am} \opn\Hom{Hom} \opn\Tor{Tor} \opn\Ext{Ext}
\opn\End{End} \opn\Aut{Aut} \opn\id{id} \opn\ini{in}
\opn\nat{nat}
\opn\pff{pf}
\opn\Pf{Pf} \opn\GL{GL} \opn\SL{SL} \opn\mod{mod} \opn\ord{ord}
\opn\Gin{Gin}
\opn\Hilb{Hilb}\opn\adeg{adeg}\opn\std{std}\opn\ip{infpt}
\opn\Pol{Pol}
\opn\sat{sat}
\opn\Var{Var}
\opn\Gen{Gen}
\opn\lex{lex}
\opn\div{div}
\opn\aff{aff} \opn\con{conv} \opn\relint{relint} \opn\st{st}
\opn\lk{lk} \opn\cn{cn} \opn\core{core} \opn\vol{vol}
\opn\link{link} \opn\star{star}
\opn\gr{gr}
\def\Cc{{\mathcal C}}
\def\pot#1#2{#1[\kern-0.28ex[#2]\kern-0.28ex]}
\opn\dirlim{\underrightarrow{\lim}}
\opn\inivlim{\underleftarrow{\lim}}
\def\Implies{\ifmmode\Longrightarrow \else
        \unskip${}\Longrightarrow{}$\ignorespaces\fi}
\def\implies{\ifmmode\Rightarrow \else
        \unskip${}\Rightarrow{}$\ignorespaces\fi}
\def\iff{\ifmmode\Longleftrightarrow \else
        \unskip${}\Longleftrightarrow{}$\ignorespaces\fi}
\def\NZQ{\mathbb}        
\def\QQ{{\NZQ Q}}
\def\ZZ{{\NZQ Z}}
\def\RR{{\NZQ R}}
\def\frk{\mathfrak}        
\def\Phi{{\frk N}}
\def\ab{{\bf a}}
\def\eb{{\bf e}}
\def\vb{{\bf v}}
\def\xb{{\bf x}}
\def \P{{\mathcal P}}
\newtheorem{thm}{Theorem}[section]
\newtheorem{lem}[thm]{Lemma}
\newtheorem{cor}[thm]{Corollary}
\newtheorem{prop}[thm]{Proposition}
\theoremstyle{definition}
\newtheorem{rem}[thm]{Remark}
\newtheorem{ex}[thm]{Example}
\newcommand{\mbe}{\operatorname{Ehr}}
\newcommand{\cone}{\operatorname{cone}}
\newcommand{\conv}{\operatorname{conv}}
\def\NZQ{\mathbb}    
\def\QQ{{\NZQ Q}}
\def\ZZ{{\NZQ Z}}
\def\RR{{\NZQ R}}
\def\frk{\mathfrak}    
\def\Phi{{\frk N}}
\def\ab{{\bf a}}
\def\eb{{\bf e}}
\def\nb{{\bf n}}
\def\vb{{\bf v}}
\def\xb{{\bf x}}
\def\qb{{\bf q}}
\def\wb{{\bf w}}
\def\P{{\mathcal P}}
\begin{document}

\title{Multibasic Ehrhart theory}
\author {Aki Mori, Takeshi Morita and Akihiro Shikama}

\thanks{}

\subjclass[2010]{52B20(primary) and 05A30(secondary)} 
\keywords{integral convex polytopes, Ehrhart theory, $q$-analogue, integer-point transform}

\address{Aki Mori, Department of Pure and Applied Mathematics, Graduate School of Information Science and Technology,
Osaka University, Toyonaka, Osaka 560-0043, Japan}
\email{a-mori@cr.math.sci.osaka-u.ac.jp}
\address{Takeshi Morita, Department of Pure and Applied Mathematics, Graduate School of Information Science and Technology,
Osaka University, Toyonaka, Osaka 560-0043, Japan }
\email{t-morita@cr.math.sci.osaka-u.ac.jp}
\address{Akihiro Shikama, Department of Pure and Applied Mathematics, Graduate School of Information Science and Technology,
Osaka University, Toyonaka, Osaka 560-0043, Japan}
\email{a-shikama@cr.math.sci.osaka-u.ac.jp}

\maketitle
\begin{abstract}
In the present paper, we introduce a multibasic extension of the Ehrhart theory. We give a multibasic extension of Ehrhart polynomials and Ehrhart series.  We also show that an analogue of Ehrhart reciprocity holds for multibasic Ehrhart polynomials.  
\end{abstract}

\section*{Introduction}
Recently, some extensions of the Ehrhart theory have been studied, for example in \cite{Cha1,Stap1,Stap2}. In this paper, we introduce a ``multibasic'' extension of  the Ehrhart theory \cite{Cha1}. In the study of the special functions and the difference equations, the $q$-analogues of the special functions are well known. For example, the  (generalized) basic hypergeometric series, the $q$-Bessel functions, the $q$-Airy functions...(see \cite{GR} for more details).  These functions have a parameter $q$, which is called ``the base''. If we introduce other bases $q_1, q_2, \dots , q_N$ where $q_j\not=q_k$, we may consider the multibasic extension  \cite{GS} of the original $q$-analogues. A $q$-analogue of the Ehrhart theory is given by F.~Chapoton \cite{Cha1}. He gave the $q$-extension of the Ehrhart theory by a suitable linear form. 

\bigskip
In the study of combinatorics, the Ehrhart theory plays an important role when we study the relationship between the convex polytopes and the integer points. We review the Ehrhart theory which was introduced by Eng\`ene Ehrhart in 1960s \cite{Ehr}. Let $\mathcal{P}\subset\RR^N$ be an integral convex polytope of dimension $d$ and the set $\mathcal{P}^\circ$ its interior. If $n$ is a positive integer then we define 
\[L_{\mathcal{P}}(n)=\#(n\mathcal{P}\cap \ZZ^N).\]
In other words, $L_{\mathcal{P}}(n)$ is equal to the number of the integer points in $n\mathcal{P}$ where $n\mathcal{P}=\{n \mbox{\boldmath$\alpha$} | \mbox{\boldmath$\alpha$}\in\mathcal{P}\}$.
Ehrhart showed that the enumerative function $L_{\mathcal{P}}(n)$ is a polynomial in $n$ of degree $d$. He also gave $L_{\mathcal{P}}(0)=1$. 
The polynomial $L_{\mathcal{P}}(n)$ is called the {\em Ehrhart polynomial} after his works. If we consider the Ehrhart polynomial $L_{\mathcal{P}}(n)$ where $n\in\ZZ$, we obtain the relation
\[L_{\mathcal{P}}(-n)=(-1)^dL_{\mathcal{P}^\circ}(n), \]
which is called the {\em Ehrhart reciprocity}. The generating function of the Ehrhart polynomial $L_{\mathcal{P}}(n)$ is 
given by 
\[\mbe_{\mathcal{P}}(t) =1+\sum_{n=1}^\infty L_{\mathcal{P}}(n)t^n.\]
This generating function is also called the {\em Ehrhart series} of $\mathcal{P}$. 
It is known that the Ehrhart series have the following representation by rational function
\[\mbe _{\mathcal{P}}(t)=
\frac{\delta_{d}t^d+\delta_{d-1}t^{d-1}+\dots +\delta_0}{(1-t)^{d+1}}\]
where $\delta_k\in\ZZ$, provided that $0\le k\le d$. 
The vector $\delta (\mathcal{P})=(\delta_0,\delta_1,\dots ,\delta_d)\in\ZZ^{d+1}$ is called the {\em $\delta$-vector} of $\mathcal{P}$.  
The $\delta$-vectors satisfy the equations $\delta_0=1$ and $\delta_1=\#(\mathcal{P}\cap\ZZ^N)-(d+1)$.  
A lot of significant other properties of the $\delta$-vectors are studied, for example in \cite{Higa,Stap3}. 
We refer the reader to \cite{BeRo,H} for the detailed information about Ehrhart theory.

\bigskip

This paper is organized as follows. In Section \ref{sec1}, we introduce the concepts of the multibasic Ehrhart series and the multibasic $\delta$-vector, which are multibasic extensions of each one. 
We show that the multibasic Ehrhart series has representation as a rational function. Its numerator is a polynomial in $t$ whose coefficients are given by the elements of $\ZZ [q_1, q_1^{-1},\dots ,q_N, q_N^{-1}]$, and the degree of the polynomial is $(\textrm{the number of vertices of}\,\,  \mathcal{P}) -1$, at most. In Section \ref{sec2},  we introduce the multibasic Ehrhart polynomials for integral polytopes $\mathcal{P} \subset (\RR_{\geq 0})^N$. 
We also give some examples of the multibasic Ehrhart polynomials for typical classes of polytopes. 
In Section \ref{sec3}, we obtain the reciprocity theorem for the multibasic Ehrhart polynomials.

\section{The multibasic Ehrhart series}\label{sec1}
In the present paper we assume that each polytope is convex.
Let $\ab =(a_1,\dots ,a_N)$
$\in\ZZ^N$ be an integer point. The Laurent monomial $\qb^\ab$  is defined as
\[\qb^\ab :=q_1^{a_1}q_2^{a_2}\cdots q_N^{a_N}, \quad \qb^{\bf{0}}:=1\]
where  ${\bf{0}}:=(0,0,\dots ,0)$.
Recall that for a given rational cone or rational polytope $S\subset \RR^N$, 
\[\sigma_S(\qb )=\sigma_S(q_1,q_2,\dots ,q_N):=\sum_{\ab\in S\cap \ZZ^N}\qb^\ab \]
is called the {\em integer-point transform} of $S$ \cite{BeRo}.  
Let $\mathcal{P}\subset \RR^N$ be an integral polytope of dimension $d$. For any $n\in\ZZ_{>0}$, let $n\mathcal{P}=\{n \mbox{\boldmath$\alpha$} | \mbox{\boldmath$\alpha$}\in\mathcal{P}\}$. Now we define the {\em multibasic Ehrhart series} as foliows:

\[\mbe_{\mathcal{P}}(t;\qb )=
\mbe_{\mathcal{P}}(t;q_1,q_2,\dots ,q_N)
:=1+\sum_{n=1}^\infty \sigma_{n\mathcal{P}}(\qb )t^n . \]
Note that 
this definition gives us the expansion of the classical Ehrhart series. Indeed the special case $\mbe_{\mathcal{P}}(t;1,1,\dots ,1)$ is the classical Ehrhart series. 


\begin{lem}[{\cite[Theorem3.5]{BeRo}}]
\label{mbeck}
For any $\wb_1, \wb_2,\dots ,\wb_d\in\ZZ^N$ 
if $\mathcal{K}
:=\{ \sum_{i=1}^dr_i\wb_i \mid r_i\ge 0\}$ 
is a simplicial $d$-cone,
then for $\vb\in\RR^N$, the integer-point transform of $\vb +\mathcal{K}$ is given by
\[
\sigma_{\vb +\mathcal{K}}(\qb )
=
\frac{\sigma_{\vb +\Gamma}(\qb)}{\displaystyle \prod_{i=1}^d(1-\qb^{\wb_i})}
,\]
where
$\Gamma :=\{\sum_{i=1}^d r_i\wb_i | 0 \le r_i < 1\}$. 
\end{lem}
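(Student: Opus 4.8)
The plan is to realize the shifted cone $\vb+\mathcal{K}$ as a disjoint union of integer translates of the half-open fundamental parallelepiped $\vb+\Gamma$, and then to evaluate the integer-point transform term by term as a product of geometric series. First I would invoke the hypothesis that $\mathcal{K}$ is a simplicial $d$-cone, which means precisely that $\wb_1,\dots,\wb_d$ are linearly independent over $\RR$. Consequently every point $\xb\in\vb+\mathcal{K}$ has a \emph{unique} representation $\xb=\vb+\sum_{i=1}^d r_i\wb_i$ with $r_i\ge 0$. Splitting each coordinate into its integer and fractional parts, $r_i=\lfloor r_i\rfloor+\{r_i\}$, rewrites this uniquely as $\xb=\mathbf{p}+\sum_{i=1}^d k_i\wb_i$, where $k_i=\lfloor r_i\rfloor\in\ZZ_{\ge 0}$ and $\mathbf{p}=\vb+\sum_{i=1}^d\{r_i\}\wb_i\in\vb+\Gamma$. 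This establishes the decomposition
\[
\vb+\mathcal{K}=\bigsqcup_{\mathbf{k}\in\ZZ_{\ge 0}^d}\Bigl(\vb+\Gamma+\sum_{i=1}^d k_i\wb_i\Bigr),
\]
where $\mathbf{k}=(k_1,\dots,k_d)$, and the union is disjoint by uniqueness of the coordinates $r_i$.

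Next I would intersect with $\ZZ^N$ and apply $\sigma$. Since each $\wb_i\in\ZZ^N$, translation by $\sum_{i=1}^d k_i\wb_i$ is a bijection of $\ZZ^N$ onto itself, so it carries the lattice points of $\vb+\Gamma$ onto those of the translated piece and multiplies each monomial $\qb^{\ab}$ by $\qb^{\sum_i k_i\wb_i}$. Hence
\[
\sigma_{\vb+\Gamma+\sum_i k_i\wb_i}(\qb)=\qb^{\sum_{i=1}^d k_i\wb_i}\,\sigma_{\vb+\Gamma}(\qb)=\Bigl(\prod_{i=1}^d(\qb^{\wb_i})^{k_i}\Bigr)\sigma_{\vb+\Gamma}(\qb).
\]
Summing the transforms over the disjoint decomposition and factoring the multi-index sum into a product of geometric series in the variables $\qb^{\wb_i}$ gives
\[
\sigma_{\vb+\mathcal{K}}(\qb)=\sigma_{\vb+\Gamma}(\qb)\prod_{i=1}^d\sum_{k_i=0}^{\infty}(\qb^{\wb_i})^{k_i}=\frac{\sigma_{\vb+\Gamma}(\qb)}{\prod_{i=1}^d\bigl(1-\qb^{\wb_i}\bigr)},
\]
which is the desired identity.

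I expect the main obstacle to be the rigorous justification of the tiling, namely that the translates $\vb+\Gamma+\sum_i k_i\wb_i$ are genuinely disjoint and exhaust $\vb+\mathcal{K}$; this rests entirely on the linear independence of the $\wb_i$ and the uniqueness of the integer/fractional splitting, so once that is stated cleanly the rest is bookkeeping. The remaining subtlety is convergence: the geometric series $\sum_{k_i\ge 0}(\qb^{\wb_i})^{k_i}$ converges only where $|\qb^{\wb_i}|<1$, so I would either impose this as a domain condition or read the equality as an identity of rational functions obtained by analytic continuation, after which the formula holds as stated.
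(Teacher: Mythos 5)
Your proof is correct and is essentially the standard argument: the paper itself states this lemma without proof, citing \cite[Theorem 3.5]{BeRo}, and the proof given there is exactly your tiling of $\vb+\mathcal{K}$ by the translates $\vb+\Gamma+\sum_i k_i\wb_i$ followed by summing geometric series in the $\qb^{\wb_i}$. Your handling of the two delicate points (disjointness/exhaustion via linear independence of the generators, and convergence read as an identity of rational functions) matches the treatment in the cited source.
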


\begin{prop}\label{mbe:simplex}
Let $\Delta\subset\RR^N$ be an integral simplex of dimension $d$ with vertices $\vb_1,\vb_2, \dots ,\vb_{d+1}$.  
Then the multibasic Ehrhart series of $\Delta$ is 
\[
\mbe_{\Delta} (t;\qb )=\frac{\displaystyle \sum_{\ab\in\Gamma\cap \ZZ^{N +1}}q_1^{a_1}
q_2^{a_2}\cdots q_N^{a_N}t^{a_{N+1}}}
{\displaystyle \prod _{i=1}^{d+1}(1-\qb^{\vb_i}t)},
\]
where $\Gamma =\{\sum_{i=1}^{d+1}r_i(\vb_i,1) |0\le r_i<1\}$. 
 More precisely the numerator of $\mbe_{\Delta} (t;\qb)$ is a polynomial in $t$, which has Laurent polynomial in $N$ variables as its coefficients and the coefficients of each Laurent polynomials is either $0$ or $1$.
Moreover the degree of this polynomial is at most $d$.
\end{prop}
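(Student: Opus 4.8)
The plan is to realize the multibasic Ehrhart series of $\Delta$ as the integer-point transform of a single $(d+1)$-dimensional simplicial cone and then apply Lemma \ref{mbeck}. First I would lift each vertex to height one, setting $\wb_i := (\vb_i, 1) \in \ZZ^{N+1}$ for $1 \le i \le d+1$, and form the cone $\mathcal{K} := \{\sum_{i=1}^{d+1} r_i \wb_i \mid r_i \ge 0\} \subset \RR^{N+1}$. Since $\vb_1, \dots, \vb_{d+1}$ are affinely independent, being the vertices of a $d$-simplex, the lifted vectors $\wb_1, \dots, \wb_{d+1}$ are linearly independent, so $\mathcal{K}$ is a simplicial $(d+1)$-cone to which Lemma \ref{mbeck} applies with $\vb = \mathbf{0}$.

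The key identification is a slicing argument, which I regard as the only genuinely substantive step. For $n \ge 0$ one checks directly that $(\xb, n) \in \mathcal{K}$ if and only if $\xb \in n\Delta$ (writing $r_i = n s_i$ for the barycentric coordinates), so the height-$n$ lattice slice of $\mathcal{K}$ is in bijection with $n\Delta \cap \ZZ^N$, the slice $n=0$ contributing only the apex. Grading the integer-point transform of $\mathcal{K}$ by the last coordinate, with $t$ recording the height, therefore gives
\[
\sigma_{\mathcal{K}}(q_1, \dots, q_N, t) = \sum_{n \ge 0} \Bigl(\sum_{\ab \in n\Delta \cap \ZZ^N} \qb^\ab\Bigr) t^n = 1 + \sum_{n \ge 1} \sigma_{n\Delta}(\qb)\, t^n = \mbe_\Delta(t; \qb).
\]
Applying Lemma \ref{mbeck} to $\mathcal{K}$, and observing that $\qb^{\wb_i}$ evaluated in the variables $(q_1, \dots, q_N, t)$ equals $\qb^{\vb_i} t$, turns the denominator into $\prod_{i=1}^{d+1}(1 - \qb^{\vb_i} t)$ and the numerator into $\sigma_\Gamma = \sum_{\ab \in \Gamma \cap \ZZ^{N+1}} q_1^{a_1} \cdots q_N^{a_N} t^{a_{N+1}}$, which is exactly the asserted expression.

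It then remains to verify the structural properties of the numerator, which I expect to be routine bookkeeping rather than the main obstacle. Because $\Gamma$ is a bounded half-open parallelepiped, the set $\Gamma \cap \ZZ^{N+1}$ is finite, so the numerator is a finite sum; distinct lattice points yield distinct monomials $q_1^{a_1} \cdots q_N^{a_N} t^{a_{N+1}}$ (the exponent vector recovers the point), so every coefficient is $0$ or $1$. For the degree in $t$, I would note that for $\ab = \sum_{i=1}^{d+1} r_i \wb_i \in \Gamma$ the last coordinate is $a_{N+1} = \sum_{i=1}^{d+1} r_i$; since $0 \le r_i < 1$, this sum lies in $[0, d+1)$, and being an integer it satisfies $a_{N+1} \in \{0, 1, \dots, d\}$. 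Hence the exponents of $t$ are nonnegative, so the numerator is a genuine polynomial in $t$, and they are bounded by $d$, which yields the claimed degree bound.
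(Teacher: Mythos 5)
Your proposal is correct and follows essentially the same route as the paper's own proof: lifting the simplex to the cone $\{\sum_i r_i(\vb_i,1)\mid r_i\ge 0\}\subset\RR^{N+1}$, identifying the height-$n$ slice with $n\Delta$ so that $\mbe_\Delta(t;\qb)=\sigma_{\cone(\Delta)}(q_1,\dots,q_N,t)$, applying Lemma \ref{mbeck}, and bounding the degree in $t$ via the integrality of $a_{N+1}=\sum_i r_i<d+1$. Your extra remarks (affine independence of the vertices guaranteeing the cone is simplicial, and distinct lattice points giving distinct monomials, hence $0$/$1$ coefficients) only make explicit details the paper leaves implicit.
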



\begin{proof}We define a set $\cone (\Delta)$ as follows:
\[\cone (\Delta ):=\left\{\left. \sum_{i=1}^{d+1}r_i(\vb_i,1)\right| r_i\ge 0 \right\}\subset \RR^{N+1}.\]
Note that $\cone (\Delta )$ is a simplicial $(d+1)$-cone, and we obtain $n\Delta$ by considering intersection of $\cone (\Delta )$ and a hyperplane $x_{N+1} = n$.  
Then we can calculate as follows by Lemma \ref{mbeck} :
\begin{align*}
\mbe_{\Delta}(t;\qb )&=\mbe_{\Delta}(t;q_1,q_2,\dots ,q_N)
=1+\sum_{n=1}^\infty \sigma_{n\Delta}(q_1,q_2,\dots ,q_N)t^n\\
&=\sum_{(a_1,\dots ,a_{N+1})\in\cone (\Delta )\cap\ZZ^{N+1}}
q_1^{a_1}q_2^{a_2}\cdots q_N^{a_N}t^{a_{N+1}}=\sigma_{\cone (\Delta )}(q_1,q_2,\dots ,q_N,t)\\
&=\frac{\sigma_\Gamma (q_1,q_2,\dots ,q_N,t)}{\prod_{i=1}^{d+1}(1-\qb^{\vb_i}t)}
=\frac{\sum_{\ab\in\Gamma\cap\ZZ^{N+1}}q_1^{a_1}q_2^{a_2}\cdots q_N^{a_N}t^{a_{N+1}}}{\prod_{i=1}^{d+1}(1-\qb^{\vb_i}t)}. 
\end{align*}
Since the $(N+1)$-th coordinate of each generator of $\cone(\Delta)$ is $1$, the $(N+1)$-th coordinate $a_{N+1}$ of $\ab \in \Gamma \cap \ZZ^{N+1}$ is $r_1+\cdots +r_{d+1}$, where $0 \le r_1 ,\ldots,r_{d+1} <1$. Hence since 
$r_1+\cdots +r_{d+1}$ is an integer, the degree in $t$ of numerator is at most $d$, as desired.
\end{proof}

\begin{ex}Let $\Delta =[a, b]$ be a $1$-simplex where $a,b\in\ZZ$ and $a<b$. Then we obtain 
\[\mbe_{\Delta}(t;\qb )=
\frac{1+\sum_{k=a+1}^{b-1}q_1^kt}{(1-q_1^at)(1-q_1^bt)}, 
\]
since $\Gamma \cap \ZZ^2=\{(0,0), (a+1,1), (a+2,1), \dots ,(b-1,1)\}$. 
\end{ex}
\begin{ex}[The standard $d$-simplex] Let $\eb_i$  $(1\le i\le d+1)$ be unit vectors in $\RR^{d+1}$ and let 
$\Delta :=\conv (\{\eb_1, \eb_2, \dots ,\eb_{d+1}\})$.
Then the multibasic Ehrhart series of $\Delta$ is given as 
\[\mbe_{\Delta}(t;\qb )=\frac{1}{\prod_{i=1}^{d+1}(1-q_it)}, \]
since we have $\Gamma \cap \ZZ^{d+2}=\{\bf 0\}$ by 
\[
\Gamma =\left\{\sum_{i=1}^{d+1}r_i(\eb_i,1) \biggl| 0\le r_i<1\right\}=\left\{(r_1,r_2,\dots ,r_{d+1},\sum_{i=1}^{d+1}r_i) \biggl| 0 \le r_i < 1\right\}.
\]
\end{ex}

\begin{rem}\label{re2}
 \[\sigma_{\cone (\mathcal{P})}(q_1,q_2,\dots ,q_N, q_{N+1})\]
is the integer-point transform of the cone over the integral polytope $\mathcal{P} \subset \RR^N$.
Then we have 
\[\sigma_{\cone (\mathcal{P})}(q_1,q_2,\dots ,q_N,t)=\mbe_{\mathcal{P}}(t;\qb )\]
and
\[\sigma_{\cone (\mathcal{P})}(1, 1, \dots ,1,t)=\mbe_{\mathcal{P}}(t)\]
by the same idea of  the proof of Proposition~\ref{mbe:simplex}.
Moreover we have 
\[
\sigma_{\cone (\mathcal{P})}(q^{a_1}, q^{a_2}, \dots ,q^{a_N},t)
=\mbe_{\mathcal{P},\lambda}(t,q) 
\]
where $\mbe_{\mathcal{P},\lambda}(t,q)$
is the $q$-Ehrhart series and $\lambda = (a_1,a_2, \ldots , a_N)$
is a linear form satisfying positivity and genericity (See \cite{Cha1}).

Namely, the classical Ehrhart series, the $q$-Ehrhart series and the multibasic Ehrhart series can be obtained from 
$\sigma_{\cone (\mathcal{P})}(q_1,q_2,\dots ,q_N, q_{N+1})$.
 \end{rem}

\begin{thm}\label{c1.9}Let $\P \subset \RR^N$ 
be an integral polytope of dimension $d$ and let $\vb_1,\ldots,\vb_m$ its vertices.
Then the multibasic Ehrhart series of $\mathcal{P}$ can be displayed as 
\begin{align*}
\mbe_{\mathcal{P}}(t;\qb )
=
\frac{\delta_{m-1} t^{m-1}+\delta _{m-2}t^{m-2}+ \cdots +\delta_1 t +\delta_0}
{\displaystyle \prod_{i=1}^m(1-\qb^{\vb_i}t)},\\
\delta_k \in \ZZ [\qb,\qb^{-1}] = \ZZ[q_1,q_1^{-1},\ldots 
,q_N,q_N^{-1}], \; \; 0\le k \le m-1 
.
\end{align*}
Namely, the numerator of the multibasic Ehrhart series is a polynomial in $t$, which has 
Laurent polynomials in $N$ variables with coefficients in $\ZZ$
as its coefficients.
In particular the degree of the numerator of $\mbe_{\mathcal{P}}(t;\qb )$
is at most $m-1$.
\end{thm}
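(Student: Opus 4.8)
The plan is to reduce the general polytope $\mathcal{P}$ to the simplex case already handled in Proposition~\ref{mbe:simplex} by triangulating $\mathcal{P}$ using only its vertices $\vb_1,\ldots,\vb_m$, and then to combine the resulting simplicial integer-point transforms over a common denominator. The key observation is that the classical proof of the rationality of the Ehrhart series (as in \cite{BeRo}) proceeds by triangulating $\cone(\mathcal{P})$ into simplicial subcones and summing their integer-point transforms with inclusion-exclusion along the shared faces; the same argument carries through verbatim when the scalar variable $t$ is refined into the multibasic data $\qb^{\vb_i}t$, because Lemma~\ref{mbeck} already supplies each simplicial summand as a rational function whose denominator is a product of factors of the form $1-\qb^{\wb}t$.

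First I would fix a triangulation $\Delta_1,\ldots,\Delta_s$ of $\mathcal{P}$ into $d$-dimensional integral simplices whose vertices are all drawn from $\{\vb_1,\ldots,\vb_m\}$; such a triangulation exists since $\mathcal{P}=\conv(\{\vb_1,\ldots,\vb_m\})$. Passing to $\cone(\mathcal{P})\subset\RR^{N+1}$ as in the proof of Proposition~\ref{mbe:simplex}, this triangulation lifts to a decomposition of $\cone(\mathcal{P})$ into the simplicial cones $\cone(\Delta_j)$, each generated by lifted vertices $(\vb_i,1)$. By Remark~\ref{re2} we have $\mbe_{\mathcal{P}}(t;\qb)=\sigma_{\cone(\mathcal{P})}(\qb,t)$, so it suffices to express this integer-point transform rationally. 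Using inclusion-exclusion on the integer points — counting each point in the relative interiors and lower-dimensional shared faces exactly once — I would write
\[
\sigma_{\cone(\mathcal{P})}(\qb,t)=\sum_{F}(\pm 1)\,\sigma_{\cone(F)}(\qb,t),
\]
where $F$ ranges over the cells of the triangulation and their faces. Each $\sigma_{\cone(F)}(\qb,t)$ is, by Proposition~\ref{mbe:simplex} (applied to a simplex $F$ of dimension $\le d$, possibly after discarding the genuine overcounting), a rational function whose denominator is $\prod_{\vb_i\in F}(1-\qb^{\vb_i}t)$ and whose numerator is a polynomial in $t$ of degree at most $\dim F$ with Laurent-polynomial coefficients in $\ZZ[\qb,\qb^{-1}]$.

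The next step is to bring this alternating sum over the common denominator $\prod_{i=1}^m(1-\qb^{\vb_i}t)$. Since every factor $1-\qb^{\vb_i}t$ appearing in any cell's denominator is one of these $m$ factors, clearing denominators yields a single fraction with this denominator and with numerator a $\ZZ[\qb,\qb^{-1}]$-linear combination of products of the numerators with the missing factors $(1-\qb^{\vb_j}t)$; this numerator is manifestly a polynomial in $t$ with coefficients in $\ZZ[\qb,\qb^{-1}]$, giving the asserted form of $\delta_k$. For the degree bound, I would count: the common denominator has degree $m$ in $t$, while $\mbe_{\mathcal{P}}(t;\qb)$ is a power series in $t$ with no pole at $t=0$ and vanishing constant term adjustment, so the numerator is a genuine polynomial; an Euler-characteristic argument on the triangulation (the alternating sum of numerator-times-cofactor degrees telescopes) forces the top-degree term $t^m$ to cancel, leaving degree at most $m-1$.

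The main obstacle I expect is the bookkeeping in the inclusion-exclusion step: a naive sum $\sum_j\sigma_{\cone(\Delta_j)}$ overcounts lattice points lying on the interior walls shared by adjacent simplices, and getting the signs and the lower-dimensional face contributions exactly right — so that the final numerator is genuinely a polynomial of degree at most $m-1$ rather than a formal rational expression — is the delicate part. One clean way to sidestep the sign bookkeeping is to use a \emph{half-open} decomposition: choose a generic point and assign each interior wall to exactly one of the two simplices sharing it, so that $\cone(\mathcal{P})$ is partitioned into disjoint half-open simplicial cones. Each half-open cone still has an integer-point transform of the form given by Lemma~\ref{mbeck} (with $\Gamma$ replaced by a half-open fundamental parallelepiped), the summation is then a literal disjoint-union sum with all signs $+1$, and the degree-in-$t$ bound from Proposition~\ref{mbe:simplex} applies to each summand, making the final degree count transparent.
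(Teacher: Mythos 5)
Your proposal follows essentially the same route as the paper's proof: triangulate $\cone(\mathcal{P})$ into simplicial cones using no new generators, apply Lemma~\ref{mbeck} (through Proposition~\ref{mbe:simplex}) to each cell, assemble the integer-point transform of $\cone(\mathcal{P})$ by inclusion-exclusion, and clear everything to the common denominator $\prod_{i=1}^m(1-\qb^{\vb_i}t)$; the half-open decomposition you suggest is a legitimate way to avoid the sign bookkeeping, but it is an optional refinement, not a necessity. The one place where your justification goes astray is the degree bound: you invoke a telescoping or Euler-characteristic cancellation that supposedly kills a $t^m$ term, but no cancellation is needed, and resting the bound on cancellation would oblige you to verify that it actually occurs. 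The direct count, which is exactly what the paper does, already follows from facts you stated: a cell $F$ of the triangulation with $\dim F = k$ contributes a numerator of $t$-degree at most $k$ (Proposition~\ref{mbe:simplex}), and the cofactor $\prod_{\vb_i \notin F}(1-\qb^{\vb_i}t)$ introduced by clearing denominators has degree $m-(k+1)$, so every individual term of your alternating sum has degree at most $k + m-(k+1) = m-1$, and hence so does the numerator.
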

\begin{proof}
Since the set $\cone (\mathcal{P})$ is a pointed cone, $\cone (\mathcal{P})$ can be triangulated into simplicial cones such that each simplicial cone does not have any  new generator.
We remark that the intersection of simplicial cones is a  simplicial cone. 
We can obtain the integer-point transform of each simplicial cone by Lemma \ref{mbeck}. 
By the principle of inclusion-exclusion, we obtain the integer-point transform of $\cone (\P )$ as an alternating sum of the integer-point transforms of those simplicial cones.  
Thus the integer-point transform of $\cone (\mathcal{P})$ is given by
\[
\sigma_{\cone (\mathcal{P})}(q_1, q_2,\dots ,q_{N+1})
=
\frac{f}{\prod_{i=1}^m (1-\qb^{\vb_i}q_{N+1})}
 \]
where $f\in\ZZ[q_1,q_1^{-1},\ldots ,q_{N+1},q_{N+1}^{-1}]$.
By Proposition~\ref{mbe:simplex}, the highest exponent of $q_{N+1}$ that appear in the numerator of 
the integer-point transform of each $(d+1)$-dimensional simplicial cone  is  at most $d$.
Thus the highest exponent of $q_{N+1}$ that appear in $f$ is at most $d+ \{m-(d+1)\}=d-1+\{m-(d-1+1)\}= m-1$. 
Since $\sigma_{\cone (\mathcal{P})}(q_1,q_2,\dots ,q_{N},t)=\mbe_{\mathcal{P}}(t;\qb )$, 
 we obtain the conclusion.  
\end{proof}

We call 
\[
\delta_{{\bf q}}({\mathcal{P}}):=(\delta_0,\delta_1,\ldots,\delta_{m-1})
,\;\;\;
\delta_{k} \in \ZZ[{\bf q},{\bf q}^{-1}] 
\]
the {\em multibasic $\delta$-vector} of $\P$, where 
$\sum_{k=0}^{m-1} \delta_k t^k$
is the numerator of 
${\rm Ehr}_{\mathcal{P}}(t; {\bf q})$
that we acquire in Theorem~\ref{c1.9}. 

\begin{cor}
Let $\P \subset \RR^N $ be an integral polytope and 
${\bf v}_1,{\bf v}_2,\ldots,{\bf v}_m$ its vertices.
Then for the multibasic $\delta$-vector of $\P$, the following properties hold.
\[
\delta_0 = 1,\;\;\; \delta_1=\sigma_{\mathcal{P}}({\bf q}) - \sum_{i=1}^m {\bf q}^{{\bf v}_i}
\]
\end{cor}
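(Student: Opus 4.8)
The plan is to read off $\delta_0$ and $\delta_1$ by comparing the two available descriptions of $\mbe_{\mathcal{P}}(t;\qb)$: the power-series definition and the rational expression from Theorem~\ref{c1.9}. Clearing the denominator in the latter, I would work with the polynomial identity
\[
\sum_{k=0}^{m-1}\delta_k t^k
=
\mbe_{\mathcal{P}}(t;\qb)\prod_{i=1}^m\bigl(1-\qb^{\vb_i}t\bigr),
\]
and then match the coefficients of $t^0$ and $t^1$ on both sides. Since only the low-order terms are needed, the higher-degree contributions on either side are irrelevant to the computation.

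First I would expand the two factors to first order in $t$. From the definition $\mbe_{\mathcal{P}}(t;\qb)=1+\sum_{n\ge 1}\sigma_{n\mathcal{P}}(\qb)t^n$, and using the identification $1\cdot\mathcal{P}=\mathcal{P}$, the relevant part is $\mbe_{\mathcal{P}}(t;\qb)=1+\sigma_{\mathcal{P}}(\qb)\,t+O(t^2)$. For the product, expanding $\prod_{i=1}^m(1-\qb^{\vb_i}t)$ gives constant term $1$ and linear term $-\bigl(\sum_{i=1}^m\qb^{\vb_i}\bigr)t$, so that $\prod_{i=1}^m\bigl(1-\qb^{\vb_i}t\bigr)=1-\bigl(\sum_{i=1}^m\qb^{\vb_i}\bigr)t+O(t^2)$.

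Multiplying these two expansions, the constant term of the right-hand side is $1$, which forces $\delta_0=1$. The coefficient of $t^1$ is obtained as $1\cdot\bigl(-\sum_{i=1}^m\qb^{\vb_i}\bigr)+\sigma_{\mathcal{P}}(\qb)\cdot 1$, giving $\delta_1=\sigma_{\mathcal{P}}(\qb)-\sum_{i=1}^m\qb^{\vb_i}$, as claimed.

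There is no substantive obstacle here: the entire argument is a first-order coefficient extraction, and Theorem~\ref{c1.9} already guarantees that the $\delta_k$ lie in $\ZZ[\qb,\qb^{-1}]$ and that the numerator really is a polynomial of degree at most $m-1$, so the comparison of coefficients is legitimate. The only point requiring any attention is the harmless identification of the $t^1$-coefficient of the defining series with $\sigma_{\mathcal{P}}(\qb)$ through $1\cdot\mathcal{P}=\mathcal{P}$.
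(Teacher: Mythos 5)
Your proof is correct: the paper states this corollary without any proof, treating it as an immediate consequence of Theorem~\ref{c1.9}, and your argument (clearing the denominator and matching the coefficients of $t^0$ and $t^1$, using that the $t^1$-coefficient of the defining series is $\sigma_{1\cdot\mathcal{P}}(\qb)=\sigma_{\mathcal{P}}(\qb)$) is exactly the natural intended argument. Nothing is missing.
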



\begin{cor}
Let $\mathcal{P} \subset \RR^N$ be an integral polytope and
let $\P' = \P + \vb , \vb \in \ZZ ^N$.
Then we have
\[
{\rm Ehr}_{\mathcal{P}'}(t;{\bf q})
=
{\rm Ehr}_{\mathcal{P}}({\bf q}^{\bf v}t;{\bf q}),
\]
\[
\delta_{{\bf q}}({\mathcal{P}'})
=(\delta_0,\delta_1{\bf q}^{\bf v},\delta_2{\bf q}^{2{\bf v}},
\ldots,\delta_{m-1}{\bf q}^{(m-1){\bf v}}),\;\;\;
\delta_{k} \in \ZZ[{\bf q},{\bf q}^{-1}].
\]

\end{cor}

\begin{proof}
Since we have 
$
\sigma_{n{\mathcal{P}'}}({\bf q})
=
\sigma_{n{\mathcal{P}}}({\bf q}) \cdot {\bf q}^{n {\bf v}}
$
for all $n \in \ZZ_{>0}$, it then follows that
\begin{align*}
{\rm Ehr}_{\mathcal{P}'}(t;{\bf q})
&=
1+\sum_{n=1}^{\infty}
\sigma_{n{\mathcal{P}'}}({\bf q})t^n
=
1+\sum_{n=1}^{\infty}
\sigma_{n{\mathcal{P}}}({\bf q}) \cdot {\bf q}^{n {\bf v}}t^n
\\
&=
1+\sum_{n=1}^{\infty}
\sigma_{n{\mathcal{P}}}({\bf q})({\bf q}^{\bf v}t)^n
=
{\rm Ehr}_{\mathcal{P}}({\bf q}^{\bf v}t;{\bf q}).
\end{align*}
Hence the numerator of  ${\rm Ehr}_{\mathcal{P}'}(t;{\bf q})$ is
$
\sum_{k=0}^{m-1} \delta_k ({\bf q}^{\bf v}t)^k
=
\sum_{k=0}^{m-1} (\delta_k {\bf q}^{k{\bf v}}) t^k
$, as desired.
\end{proof}

\section{Multibasic Ehrhart polynomials}\label{sec2}
In this section, we show the existence of a multibasic Ehrhart polynomial. The notation
\[[n]_q:=\frac{1-q^n}{1-q}, \quad n\in\ZZ\]
is a $q$-integer \cite{GR}. We review the key relation as follows:
\begin{equation*}
\left.(1+qx-x)\right|_{x=[n]_q}
=1+q\frac{1-q^n}{1-q}-\frac{1-q^n}{1-q}=q^n.
\end{equation*}
Then we have the following relation immediately:
\begin{equation}\label{qint}  
\qb^{\nb}=\left.\prod_{k=1}^N(1+q_kx_k-x_k)\right|_{x_k=[n]_{q_k}}
\end{equation} 
where $\nb = (n,n,\dots ,n)$. 
\bigskip

For any vertex $\vb$ of a rational polytope $\mathcal{P} \subset \RR^N$, the {\em vertex cone} $\mathcal{K}_\vb$ is defined by 
\[\mathcal{K}_\vb :=\{\vb +r(\xb -\vb ) \mid \xb\in\mathcal{P}, r\in\RR_{\ge 0}\}. \]
Let $\vb_i$ be a vertex of $\P$ and set 
\[\mathcal{C}_i:=(-\vb_i)+\mathcal{K}_{\vb_i}=\{r(\xb -\vb_i ) \mid \xb \in\P , r\in \RR_{\ge 0}\}.\]

\begin{lem}[Brion's theorem,\cite{Br}]\label{brion}
Let $\mathcal{P}\subset \RR^N$ be a rational polytope of dimension $d$. Then
\[\sigma_{\mathcal{P}}(\qb )
=
\sum_{\vb \textrm{ a vertex of }\mathcal{P}}\sigma_{\mathcal{K}_\vb}(\qb ).\]
\end{lem}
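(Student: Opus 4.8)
The plan is to realize the integer-point transform $\sigma$ not merely as a gadget attached to individual polytopes and cones, but as a \emph{valuation} on the vector space spanned by indicator functions of rational polyhedra in $\RR^N$, with values in the field $\QQ(q_1,\dots,q_N)$ of rational functions. Once this is set up, Brion's identity will drop out by applying $\sigma$ to a purely combinatorial identity of indicator functions (the Brianchon--Gram relation) and discarding the terms that $\sigma$ sends to $0$. I emphasise from the outset that the asserted equality is an equality of rational functions: $\sigma_{\mathcal{P}}(\qb)$ is a genuine Laurent polynomial because $\mathcal{P}$ is bounded, whereas each $\sigma_{\mathcal{K}_\vb}(\qb)$ is the analytic continuation of a series that converges only on a vertex-dependent open subset of $\qb$-space, so the right-hand sum is meaningful only after passing to rational functions.

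First I would construct the valuation. On a translated simplicial cone Lemma~\ref{mbeck} already assigns a rational function; extending to an arbitrary rational cone by choosing a triangulation into simplicial cones and combining the pieces by inclusion--exclusion, exactly as in the proof of Theorem~\ref{c1.9}, produces a candidate value $\sigma_{\mathcal{K}}(\qb)\in\QQ(q_1,\dots,q_N)$, and translating then produces values on all rational polyhedra. The nontrivial input is that this assignment is well defined and additive, i.e. that it factors through the linear relations among indicator functions and is independent of the chosen triangulation; this is the Lawrence--Khovanskii--Pukhlikov--Brion existence theorem for the integer-point valuation, which I would invoke. This is the step I expect to be the main obstacle, since independence of the triangulation is not visible from Lemma~\ref{mbeck} alone and genuinely requires the theory of valuations on rational polyhedra.

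The second ingredient is a vanishing lemma: if a rational polyhedron $Q$ contains an affine line, then $\sigma(\mathbf{1}_Q)=0$ in $\QQ(q_1,\dots,q_N)$. The heart of this is the one-dimensional computation: if $\wb\in\ZZ^N$ spans a lattice line, then formally $\sum_{k\in\ZZ}\qb^{k\wb}$ continues to
\[
\frac{1}{1-\qb^{\wb}}+\frac{\qb^{-\wb}}{1-\qb^{-\wb}}=0,
\]
and the product structure of the transform (via the factorization in Lemma~\ref{mbeck}) then forces $\sigma(\mathbf{1}_Q)=0$ whenever the lineality space of $Q$ contains a lattice direction. Since $\mathcal{P}$ is rational, every face is rational, so for any face $F$ with $\dim F\ge 1$ the direction space of $\aff(F)$ is a rational subspace of positive dimension and hence contains a lattice vector; as the tangent cone along $F$ is invariant under translation by this direction, $\sigma$ annihilates the tangent cone of every positive-dimensional face.

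Finally I would invoke the Brianchon--Gram relation
\[
\mathbf{1}_{\mathcal{P}}=\sum_{\emptyset\neq F\preceq\mathcal{P}}(-1)^{\dim F}\,\mathbf{1}_{\mathcal{K}_F},
\]
where $F$ runs over the nonempty faces of $\mathcal{P}$ and $\mathcal{K}_F$ is the tangent cone along $F$ (for a vertex $F=\{\vb\}$ this is the $\mathcal{K}_\vb$ of the statement). Applying the valuation $\sigma$ to this identity and using the vanishing lemma to kill every term with $\dim F\ge 1$ leaves only the vertices, each carrying sign $(-1)^0=1$, which yields
\[
\sigma_{\mathcal{P}}(\qb)=\sum_{\vb\text{ a vertex of }\mathcal{P}}\sigma_{\mathcal{K}_\vb}(\qb),
\]
as desired. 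As a sanity check on the signs, for $\mathcal{P}=[0,1]\subset\RR$ the two endpoints contribute $\tfrac{1}{1-q}$ and $\tfrac{q}{1-q^{-1}}$, whose sum is the Laurent polynomial $1+q=\sigma_{[0,1]}(q)$, while the edge contributes $\sigma(\mathbf{1}_{\RR})=0$.
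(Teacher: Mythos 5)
You should first note that the paper contains no proof of this lemma to compare against: it is imported verbatim from Brion's article \cite{Br} and used as a black box (in Theorem~\ref{mori1} and Lemma~\ref{mori2}), so the only question is whether your argument is a correct free-standing route to the statement. It is, and it is the standard modern proof: Brianchon--Gram plus the integer-point valuation. You are right that the equality must be read in $\QQ(q_1,\dots,q_N)$, and you correctly isolate the two genuine black boxes you invoke, namely (i) the existence and well-definedness (triangulation-independence, additivity) of the valuation $\mathbf{1}_Q \mapsto \sigma_Q(\qb)$ on indicator functions of rational polyhedra, which indeed cannot be squeezed out of Lemma~\ref{mbeck} alone and is the Lawrence--Khovanskii--Pukhlikov theorem, and (ii) the Brianchon--Gram relation; neither of these results depends on Brion's theorem, so there is no circularity, and granting them your bookkeeping is correct --- every tangent cone of a positive-dimensional face contains a line in a lattice direction (rationality of the face is used here, as you say), so the valuation kills all terms except the vertex cones, whose signs are all $(-1)^0=1$, and your sanity check on $[0,1]$ is right. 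One streamlining worth adopting: for the vanishing lemma you do not need the one-dimensional series computation combined with an appeal to the ``product structure'' of Lemma~\ref{mbeck}, which is the vaguest step in your write-up. If a rational polyhedron $Q$ contains a line in a lattice direction $\wb$, then $Q+\wb=Q$, so translation-covariance of the valuation gives $\qb^{\wb}\,\sigma(\mathbf{1}_Q)=\sigma(\mathbf{1}_{Q+\wb})=\sigma(\mathbf{1}_Q)$, i.e. $(1-\qb^{\wb})\,\sigma(\mathbf{1}_Q)=0$; since $1-\qb^{\wb}\neq 0$ in $\QQ(q_1,\dots,q_N)$, this forces $\sigma(\mathbf{1}_Q)=0$ with no computation at all.
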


\begin{thm}\label{mori1}
Let $\mathcal{P}\subset (\RR_{\ge 0})^N$ be an integral polytope of dimension $d$.  Then there exists a polynomial 
\[L_{\mathcal{P}}(x_1,x_2,\dots ,x_N)
\in
\mathbb{Q}(\qb )[x_1,x_2,\dots ,x_N]\]
such that 
\[L_{\mathcal{P}}([n]_{q_1}, [n]_{q_2}, \dots ,[n]_{q_N})
=\sigma _{n\mathcal{P}}(\qb ),\quad  \forall n\in \ZZ_{> 0}.\] 
In particular, the degree of $L_{\P}(x_1,x_2,\dots ,x_N)$ coincides with 
$\max \{\sum_{k=1}^Nv_{ik}|1\le i\le m\}$ where $\vb_i=(v_{i1},v_{i2},\dots ,v_{iN}),1\le i\le m $ are vertices of $\P$.  
\end{thm}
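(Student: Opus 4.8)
The plan is to combine Brion's theorem (Lemma~\ref{brion}) with the key substitution identity \eqref{qint}. First I would apply Brion's theorem to the dilate $n\mathcal{P}$, writing
\[
\sigma_{n\mathcal{P}}(\qb)
=\sum_{i=1}^m \sigma_{\mathcal{K}_{n\vb_i}}(\qb),
\]
where $\mathcal{K}_{n\vb_i}$ is the vertex cone of $n\mathcal{P}$ at the vertex $n\vb_i$. Since dilation by $n$ simply translates each vertex cone, we have $\mathcal{K}_{n\vb_i}=n\vb_i+\mathcal{C}_i$, so $\sigma_{\mathcal{K}_{n\vb_i}}(\qb)=\qb^{n\vb_i}\,\sigma_{\mathcal{C}_i}(\qb)$. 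The factor $\sigma_{\mathcal{C}_i}(\qb)$ is independent of $n$ (it is a fixed rational function in $\qb$, obtained from Lemma~\ref{mbeck} after triangulating $\mathcal{C}_i$ into simplicial cones), so all the $n$-dependence is concentrated in the monomials $\qb^{n\vb_i}$.

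Next I would use the identity \eqref{qint} in the form $\qb^{n\vb_i}=\prod_{k=1}^N(1+q_kx_k-x_k)^{v_{ik}}\big|_{x_k=[n]_{q_k}}$, which follows by raising the single-variable relation $(1+q_kx_k-x_k)|_{x_k=[n]_{q_k}}=q_k^n$ to the power $v_{ik}$ and multiplying over $k$. Here I must use the hypothesis $\mathcal{P}\subset(\RR_{\ge0})^N$, which guarantees $v_{ik}\ge 0$, so that $\prod_{k=1}^N(1+q_kx_k-x_k)^{v_{ik}}$ is a genuine \emph{polynomial} in $x_1,\dots,x_N$ (with coefficients in $\ZZ[\qb]$) rather than a Laurent polynomial. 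Defining
\[
L_{\mathcal{P}}(x_1,\dots,x_N)
:=\sum_{i=1}^m \sigma_{\mathcal{C}_i}(\qb)\prod_{k=1}^N(1+q_kx_k-x_k)^{v_{ik}}
\in\QQ(\qb)[x_1,\dots,x_N],
\]
the substitution $x_k=[n]_{q_k}$ recovers exactly $\sum_i\qb^{n\vb_i}\sigma_{\mathcal{C}_i}(\qb)=\sigma_{n\mathcal{P}}(\qb)$ for every $n\in\ZZ_{>0}$, as required.

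For the degree statement, I would observe that the $i$-th summand has total degree $\sum_{k=1}^N v_{ik}$ as a polynomial in the $x_k$, coming from the product $\prod_k(1+q_kx_k-x_k)^{v_{ik}}$, since each factor $1+q_kx_k-x_k$ is linear in $x_k$ and the coefficient of $x_k$ there is $q_k-1\neq0$ in $\QQ(\qb)$. Hence $\deg L_{\mathcal{P}}\le\max_i\sum_{k=1}^N v_{ik}$. The main obstacle is proving the reverse inequality, i.e.\ that the top-degree terms coming from the vertex (or vertices) achieving the maximum do not cancel against one another. I would argue this via Brion's theorem applied at a single generic base: after substituting $q_k=q^{a_k}$ for a generic positive linear form $\lambda=(a_1,\dots,a_N)$ (so that the vertices $\vb_i$ are separated by the values $\langle\lambda,\vb_i\rangle$), the dominant monomial $q^{n\langle\lambda,\vb_j\rangle}$ for the maximizing vertex $\vb_j$ cannot be matched by the contribution of any other cone, so its coefficient survives. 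This forces the highest-degree part of $L_{\mathcal{P}}$ to be nonzero and pins the degree down to exactly $\max_i\sum_{k=1}^N v_{ik}$.
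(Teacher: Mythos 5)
Your construction of $L_{\mathcal{P}}$ is exactly the paper's: Brion's theorem applied to $n\mathcal{P}$, the translation identity $\sigma_{\mathcal{K}_{n\vb_i}}(\qb)=\qb^{n\vb_i}\sigma_{\mathcal{C}_i}(\qb)$, and the substitution \eqref{qint}, producing the same polynomial $\sum_{i=1}^m\sigma_{\mathcal{C}_i}(\qb)\prod_{k=1}^N(1+q_kx_k-x_k)^{v_{ik}}$. Your explicit remark that the hypothesis $\mathcal{P}\subset(\RR_{\ge 0})^N$ forces $v_{ik}\ge 0$, so that this is a genuine polynomial rather than a Laurent polynomial, is a point the paper leaves implicit and is worth making.

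Where you diverge is the degree claim, and there your route is both heavier than necessary and, as sketched, not quite right. The obstacle you name---cancellation among the top-degree terms of different maximizing vertices---cannot occur: distinct vertices $\vb_i\neq\vb_j$ contribute their leading terms to \emph{distinct} monomials $\prod_k x_k^{v_{ik}}$ and $\prod_k x_k^{v_{jk}}$, which are linearly independent over $\QQ(\qb)$. Concretely, with $M=\max_i\sum_{k=1}^N v_{ik}$, only summands with $\sum_k v_{ik}=M$ can reach degree $M$, so the degree-$M$ homogeneous component of $L_{\mathcal{P}}$ is
\[
\sum_{i\,:\,\sum_k v_{ik}=M}\sigma_{\mathcal{C}_i}(\qb)\biggl(\prod_{k=1}^N(q_k-1)^{v_{ik}}\biggr)\prod_{k=1}^N x_k^{v_{ik}},
\]
and each coefficient is nonzero in $\QQ(\qb)$ (the series defining $\sigma_{\mathcal{C}_i}(\qb)$ is a sum of distinct Laurent monomials containing $\qb^{\bf 0}=1$, hence represents a nonzero rational function); so this component is nonzero and $\deg L_{\mathcal{P}}=M$ in one line. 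By contrast, your specialization $q_k=q^{a_k}$ has a gap as stated: for a generic positive $\lambda=(a_1,\dots,a_N)$, the vertex dominating the growth of $\sigma_{n\mathcal{P}}(q^{a_1},\dots,q^{a_N})$ is the one maximizing $\langle\lambda,\vb_i\rangle$, which need not be the vertex maximizing the coordinate sum $\sum_k v_{ik}$; your phrase ``the maximizing vertex $\vb_j$'' conflates these two maximizers. To repair it you would need $\lambda$ generic and sufficiently close to $(1,1,\dots,1)$, together with an argument (say, comparing coefficients of the linearly independent exponentials $n\mapsto q^{n\mu}$) that converts growth of the specialized series into nonvanishing of a top-degree monomial of $L_{\mathcal{P}}$. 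This is all fixable, but the direct monomial argument supersedes it. For what it is worth, the paper itself dismisses the degree claim with ``follows immediately,'' so your instinct that something must be verified here is sound; it is only the diagnosis of where the difficulty lies that is off.
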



\begin{proof} By Lemma \ref{brion}, we have 
\[
\sigma_{n{\mathcal{P}}}({\bf q})
=
\sum_{i=1}^m \sigma_{\mathcal{K}_{n{\bf v}_i}}({\bf q}),
\]
since $\mathcal{K}_{n{\bf v}_i}$ can be obtained as a parallel
translation of $\Cc_i$, then we have 
$
\sigma_{\mathcal{K}_{n{\bf v}_i}}({\bf q})
=
\sigma_{\mathcal{C}_i}({\bf q})\cdot{\bf q}^{{n{\bf v}_i}}
$.  
 Therefore we obtain
\[
\sum_{i=1}^m \sigma_{\mathcal{K}_{n{\bf v}_i}}({\bf q})
=
\sum_{i=1}^m \sigma_{\mathcal{C}_i}({\bf q})\cdot{\bf q}^{{n{\bf v}_i}}.
\]
By using \eqref{qint}, we have 
\[
{\qb}^{n\vb_i}  =  \prod_{k = 1} ^ N(1+q_kx_k-x_k)^{v_{ik}} \bigg| _{x_k = [n]_{q_k}}
\]
Thus, we have
\[
L_{\mathcal{P}}(x_1,x_2,\ldots,x_N)
=
\sum_{i=1}^m \sigma_{\mathcal{C}_i}({\bf q})
\prod_{k=1}^N (1+q_kx_k-x_k)^{{v_{ik}}}
\]

Our claim on the degree of $L_P(x_1,x_2,\ldots, x_N)$ follows immediately
since \\$\sigma_{c_i}(\qb) \in \QQ(\qb)$.

\end{proof}
We call the polynomial $L_{\mathcal{P}}(x_1,x_2,\dots ,x_N)$ the {\em multibasic Ehrhart polynomial} of $\mathcal{P}$. 

\begin{cor}Let $\P\subset(\RR_{\ge 0})^N$ be a integral polytope, $\P '=\P +\vb\subset(\RR_{\ge 0})^N$ and $\vb =(v_1,v_2,\dots ,v_N)\in\ZZ^N$. Then we have 
\[L_{\P '}(x_1,x_2,\dots x_N)
=
L_{\P}(x_1,x_2,\dots ,x_N)\prod_{k=1}^N(1+q_kx_k-x_k)^{v_k}.  \]

\end{cor}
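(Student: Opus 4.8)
The plan is to apply the multibasic Ehrhart polynomial construction from Theorem~\ref{mori1} to $\P'$ and compare it with the construction for $\P$. The key observation is that translation by $\vb$ affects the vertices and their vertex cones in a controlled way, leaving the ``cone shape'' unchanged while shifting positions.

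First I would note that the vertices of $\P' = \P + \vb$ are exactly $\vb_i + \vb$ for $i=1,\dots,m$, where $\vb_1,\dots,\vb_m$ are the vertices of $\P$. Crucially, the cones $\Cc_i = (-\vb_i) + \mathcal{K}_{\vb_i}$ depend only on the local geometry of the polytope at each vertex, and this local structure is preserved under translation. Hence for each $i$, the cone $\Cc_i' := (-(\vb_i+\vb)) + \mathcal{K}_{\vb_i+\vb}$ associated to the translated polytope equals $\Cc_i$, and therefore $\sigma_{\Cc_i'}(\qb) = \sigma_{\Cc_i}(\qb)$. This is the conceptual heart of the argument.

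With this in hand, I would substitute directly into the explicit formula obtained in the proof of Theorem~\ref{mori1}, namely
\[
L_{\P'}(x_1,\dots,x_N)
=
\sum_{i=1}^m \sigma_{\Cc_i'}(\qb)\prod_{k=1}^N (1+q_kx_k-x_k)^{(v_{ik}+v_k)}.
\]
Since $\sigma_{\Cc_i'}(\qb)=\sigma_{\Cc_i}(\qb)$ and the exponent $v_{ik}+v_k$ splits the product as $\prod_k(1+q_kx_k-x_k)^{v_{ik}}\cdot\prod_k(1+q_kx_k-x_k)^{v_k}$, the second factor is independent of $i$ and can be pulled outside the sum. The remaining sum is precisely $L_{\P}(x_1,\dots,x_N)$, yielding the claimed identity.

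The only step requiring genuine care is the claim that translation preserves each cone $\Cc_i$, i.e.\ that $\mathcal{K}_{\vb_i+\vb} = \vb + \mathcal{K}_{\vb_i}$; I expect this to be the main (though modest) obstacle, and I would verify it directly from the definition $\mathcal{K}_\vb = \{\vb + r(\xb-\vb)\mid \xb\in\P,\ r\in\RR_{\ge 0}\}$ by substituting $\xb \mapsto \xb+\vb$ and tracking how the base point shifts. Alternatively, one could bypass the cone-by-cone analysis entirely by invoking the already-proved series identity $\mathrm{Ehr}_{\P'}(t;\qb) = \mathrm{Ehr}_{\P}(\qb^\vb t;\qb)$ together with relation~\eqref{qint}, since $\sigma_{n\P'}(\qb) = \sigma_{n\P}(\qb)\cdot\qb^{n\vb}$ and $\qb^{n\vb} = \prod_{k=1}^N(1+q_kx_k-x_k)^{v_k}\big|_{x_k=[n]_{q_k}}$; matching values at all $n\in\ZZ_{>0}$ then forces the polynomial identity. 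Either route is short, and I would favor the second since it reuses machinery already established in the excerpt.
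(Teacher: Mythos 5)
Your first route is essentially the paper's own proof, line for line: the paper applies Theorem~\ref{mori1} to $\P'$, whose vertices are $\vb_i+\vb$ and whose vertex cones give the same $\Cc_i$ as those of $\P$, obtaining $L_{\P'}(x_1,\dots,x_N)=\sum_{i=1}^m\sigma_{\Cc_i}(\qb)\prod_{k=1}^N(1+q_kx_k-x_k)^{v_{ik}+v_k}$, and then pulls out the $i$-independent factor. Your explicit check that $\mathcal{K}_{\vb_i+\vb}=\vb+\mathcal{K}_{\vb_i}$, hence $\Cc_i'=\Cc_i$, is precisely the step the paper uses silently.

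One caution about the second route, which you say you would favor. The assertion that agreement at the points $([n]_{q_1},\dots,[n]_{q_N})$ for all $n\in\ZZ_{>0}$ forces equality of polynomials is not automatic: these points form a single one-parameter family in $\QQ(\qb)^N$, and in general two distinct multivariate polynomials can agree on such a curve. The claim is true here, but for a reason that must be supplied: after the invertible affine substitution $y_k=1-(1-q_k)x_k$, agreement at all $n$ becomes a statement of the form $\sum_{\ab}c_{\ab}(\qb^{\ab})^n=0$ for all $n\ge 1$, and since distinct monomials $\qb^{\ab}$ are distinct nonzero elements of $\QQ(q_1,\dots,q_N)$, a Vandermonde argument forces every $c_{\ab}=0$. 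One should also clear denominators before applying this, since when some $v_k<0$ (which is allowed, as only $\P+\vb\subset(\RR_{\ge 0})^N$ is required) the right-hand side $\prod_k(1+q_kx_k-x_k)^{v_k}$ is a rational function rather than a polynomial. So the second route is viable but strictly longer once made rigorous; the first route --- the paper's --- is the short one.
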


\begin{proof} By Theorem \ref{mori1}, we have 
\begin{align*}L_{\P '} (x_1,x_2,\dots ,x_N)&=\sum_{i=1}^m\sigma_{\mathcal{C}_i}(\qb )\prod _{k=1}^N(1+q_kx_k-x_k)^{v_{ik}+v_k}\\
&=L_{\P}(x_1,x_2,\dots ,x_N)\prod_{k=1}^N(1+q_kx_k-x_k)^{v_k}.
\end{align*}
\end{proof}

\begin{lem}\label{mori2}The sum of each integer-point transform of $\mathcal{C}_i$ is equals to one, namely, 
\[
\sum_{i=1}^m\sigma_{\mathcal{C}_i}(\qb )=1.
\]
\end{lem}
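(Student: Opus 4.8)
The plan is to turn the assertion into a statement about the behaviour of the multibasic Ehrhart series as $t\to\infty$, and then read it off from the rational form already secured in Theorem~\ref{c1.9}. The starting point is exactly the identity used in the proof of Theorem~\ref{mori1}: combining Brion's theorem (Lemma~\ref{brion}) with the observation that $\mathcal{K}_{n\vb_i}$ is the parallel translate $n\vb_i+\mathcal{C}_i$, one has for every $n\in\ZZ_{>0}$ the relation $\sigma_{n\mathcal{P}}(\qb)=\sum_{i=1}^m \qb^{n\vb_i}\sigma_{\mathcal{C}_i}(\qb)$. Substituting this into the definition of $\mbe_{\mathcal{P}}(t;\qb)$ and summing each geometric series $\sum_{n\ge 1}(\qb^{\vb_i}t)^n$ produces the closed form
\[
\mbe_{\mathcal{P}}(t;\qb)=1+\sum_{i=1}^m \sigma_{\mathcal{C}_i}(\qb)\,\frac{\qb^{\vb_i}t}{1-\qb^{\vb_i}t},
\]
which I regard as an identity of rational functions in the single indeterminate $t$ over the field $\QQ(\qb)$.

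Next I would compare this with the expression from Theorem~\ref{c1.9}, namely $\mbe_{\mathcal{P}}(t;\qb)=\bigl(\sum_{k=0}^{m-1}\delta_k t^k\bigr)\big/\prod_{i=1}^m(1-\qb^{\vb_i}t)$, whose numerator has $t$-degree at most $m-1$ while the denominator has $t$-degree exactly $m$. Clearing denominators in both displayed forms of $\mbe_{\mathcal{P}}(t;\qb)$ yields the polynomial identity
\[
\sum_{k=0}^{m-1}\delta_k t^k=\prod_{i=1}^m(1-\qb^{\vb_i}t)+\sum_{i=1}^m \sigma_{\mathcal{C}_i}(\qb)\,\qb^{\vb_i}t\prod_{j\ne i}(1-\qb^{\vb_j}t),
\]
and it remains only to match the coefficients of $t^m$. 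The left-hand side contributes $0$ by the degree bound of Theorem~\ref{c1.9}; on the right the first product contributes $(-1)^m\qb^{\sum_j\vb_j}$ and each summand of the second contributes $(-1)^{m-1}\qb^{\sum_j\vb_j}\sigma_{\mathcal{C}_i}(\qb)$. Cancelling the common nonzero factor $(-1)^{m-1}\qb^{\sum_j\vb_j}$ gives $-1+\sum_{i=1}^m\sigma_{\mathcal{C}_i}(\qb)=0$, which is precisely the claim. Conceptually this is the same as letting $t\to\infty$: each $\qb^{\vb_i}t/(1-\qb^{\vb_i}t)$ tends to $-1$ while the rational function of Theorem~\ref{c1.9} vanishes at infinity, so $1-\sum_i\sigma_{\mathcal{C}_i}(\qb)=0$.

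The only genuine subtlety, and the point I would be most careful about, is legitimizing the passage to $t=\infty$, since the $\sigma_{\mathcal{C}_i}(\qb)$ are honest rational functions whose naive specialization at $q_1=\cdots=q_N=1$ diverges. Working consistently over the field $\QQ(\qb)$ and treating $t$ as a formal indeterminate removes this concern entirely: the coefficient-of-$t^m$ computation above is a purely algebraic identity of Laurent polynomials, so no analytic limit is actually invoked. I therefore expect no real obstacle, only the routine bookkeeping of applying the degree bound from Theorem~\ref{c1.9} correctly and matching the leading $t$-coefficients without sign errors.
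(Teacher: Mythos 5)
Your proof is correct, and it takes a genuinely different route from the paper's. The paper argues geometrically: it chooses a dilated and translated copy $\mathcal{Q}=r\mathcal{P}+\alpha$ of $\mathcal{P}$ whose only lattice point is the origin and whose vertex cones contain exactly the same lattice points as the corresponding cones $\mathcal{C}_i$, and then applies Brion's theorem (Lemma~\ref{brion}) to $\mathcal{Q}$ itself, reading off $1=\sigma_{\mathcal{Q}}(\qb)=\sum_{i=1}^m\sigma_{\mathcal{C}_i}(\qb)$ in one line. You instead stay entirely on the algebraic side: Brion's theorem gives the closed form $\mbe_{\mathcal{P}}(t;\qb)=1+\sum_{i=1}^m\sigma_{\mathcal{C}_i}(\qb)\,\qb^{\vb_i}t/(1-\qb^{\vb_i}t)$, and matching this against the rational expression of Theorem~\ref{c1.9} over the common denominator $\prod_{i=1}^m(1-\qb^{\vb_i}t)$, the degree bound $\deg_t\le m-1$ on the numerator forces the coefficient of $t^m$ to vanish, which after cancelling $(-1)^{m-1}\qb^{\vb_1+\cdots+\vb_m}$ is precisely the claim. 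Both proofs have Brion's theorem as the engine; they differ in how the normalization is extracted. The paper's construction is shorter and self-contained given Brion, but the existence of a suitable pair $(r,\alpha)$ --- a lattice-genericity statement --- is asserted without proof and does require care. Your argument replaces that geometric step by the degree bound of Theorem~\ref{c1.9}, which the paper establishes independently (there is no circularity: neither Theorem~\ref{c1.9} nor Theorem~\ref{mori1} uses Lemma~\ref{mori2}), so every step is a formal identity in $\QQ(\qb)[[t]]$ and $\QQ(\qb)(t)$; in particular, you are right that the formal coefficient comparison makes the heuristic ``$t\to\infty$'' limit, and any worry about specializing the rational functions $\sigma_{\mathcal{C}_i}(\qb)$, unnecessary. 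The price is a logical dependence on Theorem~\ref{c1.9}; what you gain is a proof with no geometric genericity left to verify.
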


\begin{proof}
Let $\mathcal{Q}$ be the image of $\mathcal{P}$ by a dilation and translation
and  ${\bm \varepsilon}_1,\ldots, {\bm \varepsilon}_m \in \RR^N$ its vertices, that is
\[
\mathcal{Q}:=r\mathcal{P}+{\bm \alpha}=\textrm{conv}(\{{\bm \varepsilon}_1,\ldots, {\bm \varepsilon}_m\})
,\;\;
r \in \RR, {\bm \alpha} \in \RR^N. 
\]
Then we can choose ${\bm \varepsilon}_1,\ldots, {\bm \varepsilon}_m$ 
in such a way that:
\begin{itemize}
\item 
the origin of $\RR^N$ is a unique integer point in $\mathcal{Q}$,
\[
\mathcal{Q} \cap \ZZ^N = \{{\bf 0}\};
\]
\item
the integer points contained in each vertex cone of $\mathcal{Q}$
are precisely the integer points contained in the corresponding vertex cone $\mathcal{C}_i$,
\[
\mathcal{K}_{{\bm \varepsilon}_i} \cap \ZZ^N
=
\mathcal{C}_i \cap \ZZ^N
,\;\;\;\;
\textrm{for all}\;\; i = 1,\ldots,m.
\]
\end{itemize}
By the two conditions and Lemma \ref{brion}, we obtain 
\[
1
=
\sigma_{\mathcal{Q}}(\qb )
=
\sum_{\vb \textrm{ a vertex of } \mathcal{Q}}\sigma_{\mathcal{K}_\vb}(\qb )
=
\sum_{i=1}^m\sigma_{\mathcal{K}_{{\bm \varepsilon}_i}}(\qb )
=
\sum_{i=1}^m\sigma_{\mathcal{C}_i}(\qb ).
\]

\end{proof}

\begin{cor}Let $\P\subset\RR^N$ be an integral polytope. Then the constant part of the multibasic Ehrhart polynomial $L_\P(x_1,x_2,\dots ,x_N)$ is equals to one.  
\end{cor}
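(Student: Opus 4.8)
The plan is to identify the constant part of $L_{\P}$ with its evaluation at the origin $x_1=\cdots=x_N=0$, and then read off the value directly from the explicit formula established in Theorem~\ref{mori1}. Recall that for a polynomial in $\QQ(\qb)[x_1,\dots,x_N]$, the constant part is precisely the value obtained by setting every $x_k$ equal to $0$.

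First I would reduce to the case $\P\subset(\RR_{\ge 0})^N$, the setting in which $L_{\P}$ was constructed. For a general integral polytope $\P\subset\RR^N$, choose $\vb=(v_1,\dots,v_N)\in\ZZ^N$ so that $\P':=\P+\vb\subset(\RR_{\ge 0})^N$. By the translation corollary preceding this statement, the two polynomials are related by
\[
L_{\P'}(x_1,\dots,x_N)=L_{\P}(x_1,\dots,x_N)\prod_{k=1}^N(1+q_kx_k-x_k)^{v_k},
\]
and the product on the right equals $1$ when every $x_k=0$. Hence $L_{\P'}(0,\dots,0)=L_{\P}(0,\dots,0)$, so the constant part is translation invariant and it suffices to treat $\P'$.

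Next I would substitute $x_1=\cdots=x_N=0$ into the formula
\[
L_{\P}(x_1,\dots,x_N)=\sum_{i=1}^m\sigma_{\mathcal{C}_i}(\qb)\prod_{k=1}^N(1+q_kx_k-x_k)^{v_{ik}}
\]
of Theorem~\ref{mori1}. Each factor becomes $(1+0-0)^{v_{ik}}=1$, so the whole product collapses to $1$ for every index $i$, leaving $L_{\P}(0,\dots,0)=\sum_{i=1}^m\sigma_{\mathcal{C}_i}(\qb)$. Finally I would invoke Lemma~\ref{mori2}, which states exactly that $\sum_{i=1}^m\sigma_{\mathcal{C}_i}(\qb)=1$, and this completes the argument. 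There is no genuine obstacle here once Theorem~\ref{mori1} and Lemma~\ref{mori2} are available; the only point requiring a word of care is the passage from $\RR^N$ to the nonnegative orthant, dispatched above. Conceptually the result mirrors the classical identity $L_{\P}(0)=1$, since $[0]_{q_k}=(1-q_k^{0})/(1-q_k)=0$ shows that evaluating the $q$-integers at $n=0$ is precisely evaluating each $x_k$ at $0$.
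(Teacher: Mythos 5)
Your proof is correct and takes essentially the same route as the paper's: substitute $x_1=\cdots=x_N=0$ into the formula $L_{\P}(x_1,\dots,x_N)=\sum_{i=1}^m\sigma_{\mathcal{C}_i}(\qb)\prod_{k=1}^N(1+q_kx_k-x_k)^{v_{ik}}$ from Theorem~\ref{mori1}, note that every factor collapses to $1$, and conclude by Lemma~\ref{mori2}. The preliminary translation step you add is a harmless refinement (the paper states the corollary for $\P\subset\RR^N$ but proves it via Theorem~\ref{mori1}, which is set in $(\RR_{\ge 0})^N$), and otherwise the argument coincides with the paper's.
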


\begin{proof}
By substituting $x_i=0$ to $L_{\P}(x_1,x_2,\dots ,x_N)$, we have
\[L_{\P}(0,0,\dots ,0)
=\sum_{i=1}^m\sigma_{\mathcal{C}_i}(\qb ). \]
By Lemma \ref{mori2}, we obtain the conclusion. 

\end{proof}

Next, we give examples of the multibasic Ehrhart polynomials by Lemma \ref{mori2}. We can compute  $L_{\mathcal{P}}(x_1,x_2,\dots ,x_N)$ for the classes of polytopes.

\begin{ex}\label{mp2.7}
We fix the sets as follow:
\begin{enumerate}
\item 
The $d$-simplex of $\RR^{d+1}$ is given by 
$\Delta :=\conv (\{\eb_1, \eb_2, \dots ,\eb_{d+1}\})$,
\item 
The $d$-simplex of $\RR^{d}$ is given by 
$\Delta ':=\conv (\{{\bf 0}, \eb_1, \eb_2, \dots ,\eb_{d}\})$, 
\item 
The unit $d$-cube is given by 
$\square :=\conv(\{(x_1,x_2,\dots ,x_d) \mid x_i =0\,\textrm{or}\,1, 1\le i\le d\})$.
\end{enumerate}
Then each multibasic Ehrhart polynomial is given by: 
\begin{enumerate}
\item $L_{\Delta}(x_1,x_2,\dots ,x_{d+1})=\displaystyle\sum_{i=1}^{d+1}\dfrac{q_i^d(q_i-1)}{ 
\displaystyle \prod_{\stackrel{1\leq j \leq d+1}{j \neq i}}(q_i-q_j)}x_i+1$, 
\item $L_{\Delta '}(x_1,x_2,\dots ,x_d)=\displaystyle \sum _{i=1}^d \dfrac{q_i^d}{ 
\displaystyle\prod_{\stackrel{1\leq j \leq d+1}{j \neq i}}(q_i-q_j)}x_i+1 $,
\item $L_\square (x_1,x_2,\dots ,x_d)=\displaystyle\prod_{1\le i\le d}(q_ix_i+1)$.
\end{enumerate}
\end{ex}

\begin{proof}We consider each case. 
\begin{enumerate}\item The $d$-simplex $\Delta \subset \RR^{d+1}$ case. 

For any $i$\,(provided that $1\le i\le d+1$)
, the generator of each vertex cone $\mathcal{K}_{{\eb}_i}$ is given by $\{\eb_j-\eb_i\mid1\le j\le d+1, j\not= i\}$. Then we have 
\begin{align*}
\Gamma_i
:=
&\left\{\left.\sum_{\stackrel{1\le j\le d+1}{j\not= i}}r_j(\eb_j-\eb_i)\right|0\le r_j< 1\right\}\\
=& \left\{\left.\left(r_1,r_2,\dots ,r_{i-1}, -\sum_{\stackrel{1\le j\le d+1}{j\not=i}}r_j, r_{i+1},\dots , r_{d+1}\right)
\right|0\le r_j< 1 
\right\}.
\end{align*}
Namely, we have $\Gamma_i\cap \ZZ^{d+1}=\{\bf{0}\}$ for any $i$. By Lemma \ref{mbeck}, 
\[\sigma_{\mathcal{C}_i}(\qb ) =
\sigma_{(-\eb_i)+\mathcal{K}_{\eb_i}}(\qb )
=\frac{1}{\prod_{\stackrel{1\le j\le d+1}{j\not= i}}(1-q_i^{-1}q_j)}.\]
Combining Lemma \ref{mori2} and Theorem \ref{mori1}, we obtain
\begin{align*}
L_{\Delta}(x_1,x_2,\dots , x_{d+1})&=\sum_{i=1}^{d+1}\sigma_{\mathcal{C}_i}(\qb )(1+q_ix_i-x_i)\\
&=\sum_{i=1}^{d+1}\sigma_{\mathcal{C}_i}(\qb )(q_i-1)x_i+\sum_{i=1}^{d+1}\sigma_{\mathcal{C}_i}(\qb )\\
&=\sum_{i=1}^{d+1}\frac{q_i-1}{\prod_{\stackrel{1\le j\le d+1}{j\not= i}}(1-q_i^{-1}q_j)}x_i+1\\
&=\sum_{i=1}^{d+1}\frac{q_i^d(q_i-1)}{\prod_{\stackrel{1\le j\le d+1}{j\not= i}}(q_i-q_j)}x_i+1. 
\end{align*}

\item The $d$-simplex $\Delta '\subset \RR^d$ case. 

We remark that the generator of the vertex cone $\mathcal{K}_{\bf 0}$ is $\{\eb_1,\eb_2,\dots ,\eb_d\}$ and the generator of each vertex cone $\mathcal{K}_{\eb_i}$ is given by $\{{\bf 0}-\eb_i\} \cup \{\eb_j-\eb_i \mid1\le j\le d+1, j \not= i\}$. Then we have
\begin{align*}
\Gamma_0
&:=
\left\{\left.\sum_{j=1}^{d} r_j {\bf e}_j
\,\right|\,
0 \leq r_j < 1\right\}
=
\left\{\left.(r_1,r_2, \ldots, r_d)
\,\right|\,
0 \leq r_j < 1\right\},
\\
\Gamma_i
&:=
\left\{\left.
r_0({\bf 0}-{\bf e}_i) 
+
\sum_{\stackrel{1\leq j \leq d}{j \neq i}} 
r_j ({\bf e}_j-{\bf e}_i)
\,\right|\,
0 \leq r_0, r_j < 1\right\}
\\
&=
\left\{\left.
(r_1,r_2, \ldots, r_{i-1}, -{\sum_{\stackrel{0 \leq j \leq d}{j \neq i}}r_j}, 
r_{i+1}, \ldots, r_d) 
\,\right|\,
0 \leq r_0, r_j < 1
\right\}.
\end{align*}
Therefore, we have $\Gamma_0\cap\ZZ^d=\Gamma_i\cap\ZZ^d=\{{\bf 0}\}$, provided that $1\le i\le d$. 
By Lemma \ref{mbeck}, we  obtain 
\begin{align*}
\sigma_{\mathcal{C}_0} ({\bf q})
&=
\sigma_{\mathcal{K}_{\bf 0}} ({\bf q})
= 
\frac{1}
{\displaystyle \prod_{1\leq j \leq d} (1-q_j)},
\\
\sigma_{\mathcal{C}_i} ({\bf q})
&=
\sigma_{(-{{\bf e}_i})+\mathcal{K}_{{\bf e}_i}} ({\bf q})
= 
\frac{1}
{(1-q_i^{-1})\displaystyle \prod_{\stackrel{1\leq j \leq d}{j \neq i}}(1-q_i^{-1}q_j)}.
\end{align*}
By Lemma \ref{mori2} and Theorem \ref{mori1}, we have
\begin{align*}
L_{\Delta'}(x_1,x_2,\ldots,x_d)
&=
\sigma_{\mathcal{C}_0} ({\bf q})
+
\sum_{i=1}^d
\sigma_{\mathcal{C}_i} ({\bf q})
(1+q_ix_i-x_i)
\\
&=
\sum_{i=1}^{d}
\sigma_{\mathcal{C}_i} ({\bf q}) (q_i-1)x_i
+
\sum_{i=0}^{d}
\sigma_{\mathcal{C}_i} ({\bf q})
\\
&=
\sum_{i=1}^{d}
\frac{q_i-1}
{(1 - q_i^{-1})\displaystyle \prod_{\stackrel{1 \leq j \leq d}{j \neq i}}(1 - q_i^{-1}q_j)}
x_i
+
1\\
&=
\sum_{i=1}^d
\frac{q_i^d}
{\displaystyle \prod_{\stackrel{1 \leq j \leq d}{j \neq i}}(q_i - q_j)}
x_i
+
1.
\end{align*}
\item The unit $d$-cube $\square\subset \RR^d$ case. 

If we consider the special case $x_i=[n]_{q_i}$, we obtain 
\begin{align*}
L_{\Box}([n]_{q_1},[n]_{q_2},\ldots,[n]_{q_d})&=
\prod_{1 \leq i \leq d}(1+q_i+q_i^2+\cdots+q_i^n)\\
&=
\sum_{{\bf a} \in n\Box \cap \ZZ^d} {\bf q}^{\bf a}
=
\sigma_{n \Box}({\bf q}).
\end{align*}
\end{enumerate}
\end{proof}

\section{Multibasic Ehrhart reciprocity}\label{sec3}
Let $\P^\circ $ be the interior of $\P$. 
We define the multibasic Ehrhart series for the interior of the polytope $\mathcal{P}\subset \RR^N$ of dimension $d$ as follows:
\[\mbe_{\mathcal{P}^\circ}(t;\qb ):=\sum_{n=1}^\infty \sigma_{n\mathcal{P}^\circ}(\qb )t^n. \]
We consider a multibasic analogue of the Ehrhart reciprocity. 

\begin{lem}[Stanley's reciprocity theorem, \cite{Stan} ]\label{Stanrey}
Let $\mathcal{K}\subset \RR^N$ be a rational $d$-cone with the origin as apex. Then  
\[
\sigma_{\mathcal{K}}\left(\frac{1}{q_1},\frac{1}{q_2},\dots , \frac{1}{q_N}\right)
=
(-1)^d\sigma_{\mathcal{K}^\circ}(q_1,q_2,\dots ,q_N). 
\]
\end{lem}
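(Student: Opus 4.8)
The plan is to prove the statement first for simplicial cones, and then reduce the general case to this by a triangulation argument.

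\textbf{Simplicial case.} Suppose first that $\mathcal{K} = \{\sum_{i=1}^d r_i\wb_i : r_i \ge 0\}$ is simplicial with primitive integer generators $\wb_1,\dots,\wb_d$. By the tiling argument underlying Lemma~\ref{mbeck}, applied to $\mathcal{K}$ and to its relative interior using the two half-open fundamental parallelepipeds $\Gamma = \{\sum r_i\wb_i : 0\le r_i<1\}$ and $\Gamma^\circ = \{\sum r_i\wb_i : 0<r_i\le 1\}$, one has $\sigma_{\mathcal{K}}(\qb) = \sigma_\Gamma(\qb)/\prod_i(1-\qb^{\wb_i})$ and $\sigma_{\relint\mathcal{K}}(\qb) = \sigma_{\Gamma^\circ}(\qb)/\prod_i(1-\qb^{\wb_i})$. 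The map $\ab \mapsto (\wb_1+\cdots+\wb_d)-\ab$, i.e.\ $r_i \mapsto 1-r_i$, is a bijection $\Gamma\cap\ZZ^N \to \Gamma^\circ\cap\ZZ^N$, whence $\sigma_{\Gamma^\circ}(\qb) = \qb^{\wb_1+\cdots+\wb_d}\,\sigma_\Gamma(1/\qb)$. Feeding this and the identity $\prod_i(1-\qb^{-\wb_i}) = (-1)^d\,\qb^{-(\wb_1+\cdots+\wb_d)}\prod_i(1-\qb^{\wb_i})$ into the formula for $\sigma_{\mathcal{K}}(1/\qb)$ yields, after cancellation, the relative-interior reciprocity $\sigma_{\relint\mathcal{F}}(1/\qb) = (-1)^{\dim\mathcal{F}}\,\sigma_{\mathcal{F}}(\qb)$ for any simplicial cone $\mathcal{F}$; the full-dimensional case $\mathcal{F}=\mathcal{K}$ is the assertion. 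I expect this step to be routine.

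\textbf{General case.} For an arbitrary rational $d$-cone $\mathcal{K}$, I would triangulate it into simplicial cones using only the rays of $\mathcal{K}$, so that every face $\mathcal{F}$ of the resulting fan $T$ is again a simplicial cone spanned by a subset of those rays. Since $\mathcal{K}$ is the disjoint union of the relative interiors of its faces, while $\mathcal{K}^\circ$ is the disjoint union of the relative interiors of the \emph{interior} faces, we have $\sigma_{\mathcal{K}}(\qb) = \sum_{\mathcal{F}\in T}\sigma_{\relint\mathcal{F}}(\qb)$ and $\sigma_{\mathcal{K}^\circ}(\qb) = \sum_{\mathcal{F}\ \textrm{interior}}\sigma_{\relint\mathcal{F}}(\qb)$. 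Applying the simplicial reciprocity termwise and then re-expanding each closed cone via $\sigma_{\mathcal{F}}(\qb) = \sum_{\mathcal{G}\subseteq\mathcal{F}}\sigma_{\relint\mathcal{G}}(\qb)$ gives
\[
\sigma_{\mathcal{K}}(1/\qb) = \sum_{\mathcal{G}\in T}\Big(\sum_{\mathcal{F}\supseteq\mathcal{G}}(-1)^{\dim\mathcal{F}}\Big)\sigma_{\relint\mathcal{G}}(\qb).
\]

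\textbf{The main obstacle} is to evaluate the inner coefficient $c(\mathcal{G}) := \sum_{\mathcal{F}\supseteq\mathcal{G}}(-1)^{\dim\mathcal{F}}$ for each face $\mathcal{G}$. Writing $e=\dim\mathcal{G}$ and parametrizing the faces $\mathcal{F}\supseteq\mathcal{G}$ by the faces of the link $\lk(\mathcal{G})$ in $T$ (with $\mathcal{F}=\mathcal{G}$ corresponding to the empty face), one finds $c(\mathcal{G}) = (-1)^e + (-1)^{e+1}\chi(\lk(\mathcal{G}))$, where $\chi$ is the Euler characteristic. The link of an interior face is a triangulated sphere $S^{d-e-1}$ and the link of a boundary face is a triangulated ball, so $\chi(\lk(\mathcal{G})) = 1+(-1)^{d-e-1}$ in the interior case and $\chi(\lk(\mathcal{G}))=1$ in the boundary case. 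Substituting gives $c(\mathcal{G}) = (-1)^d$ for interior faces and $c(\mathcal{G})=0$ for boundary faces. Hence only the interior faces survive, each with coefficient $(-1)^d$, and the right-hand side collapses to $(-1)^d\sum_{\mathcal{G}\ \textrm{interior}}\sigma_{\relint\mathcal{G}}(\qb) = (-1)^d\,\sigma_{\mathcal{K}^\circ}(\qb)$, as desired. The delicate point is precisely the topological input identifying the links of interior versus boundary faces with spheres versus balls; the rest is bookkeeping.
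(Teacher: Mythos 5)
There is nothing in the paper to compare your proof against: the paper does not prove this lemma at all, but imports it as Stanley's reciprocity theorem with a citation to \cite{Stan}, and then uses it as a black box in Section~\ref{sec3}. So the only question is whether your blind proof is correct, and it is; it is essentially the classical Euler-characteristic proof of this theorem.

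The simplicial step is right: the lattice-point bijection $\ab\mapsto(\wb_1+\cdots+\wb_d)-\ab$ between the two half-open parallelepipeds, combined with $\prod_{i=1}^d(1-\qb^{-\wb_i})=(-1)^d\qb^{-(\wb_1+\cdots+\wb_d)}\prod_{i=1}^d(1-\qb^{\wb_i})$, gives the reciprocity for simplicial cones of every dimension (your two formulations, $\sigma_{\mathcal{K}}(1/\qb)=(-1)^d\sigma_{\mathcal{K}^\circ}(\qb)$ and $\sigma_{\relint\mathcal{F}}(1/\qb)=(-1)^{\dim\mathcal{F}}\sigma_{\mathcal{F}}(\qb)$, are exchanged by substituting $\qb\mapsto 1/\qb$, which is harmless at the level of rational functions). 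The inclusion--exclusion step and the coefficient evaluation $c(\mathcal{G})=(-1)^{\dim\mathcal{G}}\bigl(1-\chi(\lk(\mathcal{G}))\bigr)$, hence $(-1)^d$ on interior faces and $0$ on boundary faces, also check out; for instance, for a maximal cone the link is empty and $c=(-1)^d$, while for $\mathcal{G}=\{{\bf 0}\}$ the link is the full cross-section complex, a ball, and $c=0$, as it must be since the apex lies on the relative boundary. Two points would need to be made explicit in a write-up. First, every identity here is an identity of rational functions ($\sigma_{\mathcal{K}}(\qb)$ and $\sigma_{\mathcal{K}}(1/\qb)$ converge on disjoint regions), and pointedness of $\mathcal{K}$ (the ``origin as apex'' hypothesis) is what makes all the $\sigma_{\relint\mathcal{G}}$ rational functions with a common region of convergence, so that applying the simplicial case termwise and collecting terms is legitimate. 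Second, the sphere/ball description of links is the one genuinely nontrivial outside input; it holds here because a rectilinear triangulation of a convex cone is automatically PL, but if you want to avoid PL topology, observe that you only need $\chi(\lk(\mathcal{G}))$, which is determined by the local homology of the underlying ball and is therefore triangulation-independent. For contrast, the standard textbook proof (in \cite{BeRo}, on which this paper relies elsewhere) does your simplicial step and then sidesteps the topology entirely: it writes $\mathcal{K}$ and $\mathcal{K}^\circ$ as disjoint unions of half-open simplicial cones coming from a triangulation, so reciprocity passes to general cones termwise, with no inclusion--exclusion and no link computation. Your route pays for its conceptual transparency with that topological input; the half-open route is more elementary but requires bookkeeping of which facets of each simplicial cone are kept open.
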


We also prepare the following lemma to study the reciprocity. 
\begin{lem}\label{st2}Let $\mathcal{P}\subset \RR^N$ be an integral polytope . For any $n\in\ZZ_{>0}$, we have 
\[\sum_{n\le 0}L_{\mathcal{P}}([n]_{q_1}, [n]_{q_2}, \dots ,[n]_{q_N})t^n
+
\sum_{n\ge 1}
L_{\mathcal{P}}([n]_{q_1}, [n]_{q_2},\dots , [n]_{q_N})t^n
=0.
\]
\end{lem}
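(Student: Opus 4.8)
The plan is to reduce the claimed vanishing to the explicit closed form for the multibasic Ehrhart polynomial evaluated at $q$-integers that was already produced inside the proof of Theorem~\ref{mori1}, and then to a one-variable geometric-series cancellation. First I would recall from that proof, combined with the relation~\eqref{qint}, that for the vertices $\vb_1,\dots,\vb_m$ of $\P$ one has
\[
L_{\mathcal{P}}([n]_{q_1},[n]_{q_2},\dots,[n]_{q_N})
=\sum_{i=1}^m \sigma_{\mathcal{C}_i}(\qb)\,\qb^{n\vb_i}.
\]
The key observation is that the underlying algebraic identity $(1+q_kx_k-x_k)\big|_{x_k=[n]_{q_k}}=q_k^n$ is an equality in $\QQ(q_k)$ that holds for \emph{every} $n\in\ZZ$, not merely for $n>0$; hence the displayed formula is valid for all integers $n$, the right-hand side making sense because $\qb^{n\vb_i}$ is a Laurent monomial for each such $n$. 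I write $g(n)$ for this common value.

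Next I would substitute this expression into both sums and interchange the finite sum over $i$ with the sums over $n$, obtaining
\begin{align*}
\sum_{n\le 0}g(n)t^n+\sum_{n\ge 1}g(n)t^n
=\sum_{i=1}^m\sigma_{\mathcal{C}_i}(\qb)
\left(\sum_{n\le 0}(\qb^{\vb_i}t)^n+\sum_{n\ge 1}(\qb^{\vb_i}t)^n\right).
\end{align*}
It therefore suffices to show that for each fixed $i$ the bracketed expression vanishes. Setting $z=\qb^{\vb_i}t$, the two half-infinite geometric sums have the rational closed forms
\[
\sum_{n\ge 1}z^n=\frac{z}{1-z},
\qquad
\sum_{n\le 0}z^n=\frac{1}{1-z^{-1}}=\frac{z}{z-1},
\]
whose sum is $0$. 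Consequently each summand indexed by $i$ contributes $0$, and the whole expression vanishes, which is the assertion.

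The one genuinely delicate point, which I expect to be the main obstacle, is the interpretation of the two series: $\sum_{n\ge 1}z^n$ and $\sum_{n\le 0}z^n$ have disjoint regions of convergence and cannot simply be added as convergent series. The identity must be read at the level of rational functions in $\QQ(\qb)(t)$, each half-infinite sum being identified with its rational closed form; the closed form of the $n\le 0$ part is the expansion of the same rational function $z/(1-z)$ about $t=\infty$, carrying the opposite sign. This is precisely the formal mechanism underlying Stanley reciprocity (Lemma~\ref{Stanrey}), and making the identification precise---for instance by declaring both sums to be elements of the field of rational functions via their unique expansions at $0$ and at $\infty$---is the step that must be set up carefully. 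Once that convention is fixed, the per-$i$ cancellation above is immediate and the proof concludes.
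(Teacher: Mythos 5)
Your proposal is correct and follows essentially the same route as the paper's own proof: substitute the identity $L_{\mathcal{P}}([n]_{q_1},\dots,[n]_{q_N})=\sum_{i=1}^m \sigma_{\mathcal{C}_i}(\qb)\,\qb^{n\vb_i}$ from Theorem~\ref{mori1}, interchange the sums, and cancel the two half-infinite geometric series $\sum_{n\ge 1}z^n$ and $\sum_{n\le 0}z^n$ termwise in $i$. In fact you are somewhat more careful than the paper, which performs exactly this cancellation but leaves implicit both the validity of the formula for $n\le 0$ and the rational-function interpretation of the two series that you rightly flag as the delicate point.
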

\begin{proof}
Let $\vb_1, \vb_2,\dots ,\vb_m$ be the vertices of $\mathcal{P}$. We recall from Theorem \ref{mori1} that  there exists a relation 
\[L_{\mathcal{P}}([n]_{q_1}, [n]_{q_2}, \dots ,[n]_{q_N})
=
\sum_{i=1}^m \sigma_{\mathcal{C}_i}(\qb )\cdot \qb^{n\vb_i}. \]
Then we have 
\begin{align*}
&\sum_{n \leq 0}
L_{\mathcal{P}}
([n]_{q_1},[n]_{q_2},\ldots,[n]_{q_N})
t^n
+
\sum_{n \geq 1}
L_{\mathcal{P}}
([n]_{q_1},[n]_{q_2},\ldots,[n]_{q_N})
t^n
\\
&=
\sum_{n \leq 0}
\Bigr\{
\sum_{i=1}^m
\sigma_{\mathcal{C}_i}({\bf q})\cdot{\bf q}^{{n{\bf v}_i}}
\Bigl\}
t^n
+
\sum_{n \geq 1}
\Bigr
\{
\sum_{i=1}^m
\sigma_{\mathcal{C}_i}({\bf q})\cdot{\bf q}^{{n{\bf v}_i}}
\Bigl\}
t^n
\\
&=
\sum_{i=1}^m
\sigma_{\mathcal{C}_i}({\bf q})
\Bigr
\{
\sum_{n \geq 0}
({\bf q}^{-{{\bf v}_i}}t^{-1})^n
\Bigl\}
+
\sum_{i=1}^m
\sigma_{\mathcal{C}_i}({\bf q})
\Bigr
\{
\sum_{n \geq 1}
({\bf q}^{{\bf v}_i}t)^n
\Bigl\}
=
0.
\end{align*}
 Therefore, we obtain the conclusion. 
\end{proof} 

\begin{thm}
Let $\mathcal{P}\subset\RR^N$ be an integral polytope of dimension $d$. 
 For any $n\in\ZZ_{>0}$, we have 
\[L_{\mathcal{P}}([-n]_{q_1}, [-n]_{q_2}, \dots , [-n]_{q_N})
=
(-1)^d\sigma_{n\mathcal{P}^\circ}\left(\frac{1}{\qb }\right).\]
\end{thm}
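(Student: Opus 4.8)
The plan is to prove the multibasic Ehrhart reciprocity by combining the vertex-cone decomposition from Theorem~\ref{mori1} with Stanley's reciprocity theorem (Lemma~\ref{Stanrey}), using the generating-function identity of Lemma~\ref{st2} as the bridge. First I would recall that Theorem~\ref{mori1} gives
\[
L_{\mathcal{P}}([n]_{q_1},\dots,[n]_{q_N})
=
\sum_{i=1}^m \sigma_{\mathcal{C}_i}(\qb)\cdot\qb^{n\vb_i},
\]
and that this expression, being a polynomial identity in the $[n]_{q_k}$, continues to hold when $n$ is replaced by $-n$. So the left-hand side of the desired identity equals $\sum_{i=1}^m \sigma_{\mathcal{C}_i}(\qb)\cdot\qb^{-n\vb_i}$, and the whole problem reduces to identifying this sum with $(-1)^d\sigma_{n\mathcal{P}^\circ}(1/\qb)$.

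The key geometric step is to apply Brion's theorem (Lemma~\ref{brion}) to the \emph{interior} count. Specifically I would express $\sigma_{n\mathcal{P}^\circ}(\qb)$ via the interiors of the vertex cones: each dilated vertex cone $\mathcal{K}_{n\vb_i}$ is the translate $n\vb_i+\mathcal{C}_i$, so its interior's integer-point transform factors as $\qb^{n\vb_i}\cdot\sigma_{\mathcal{C}_i^\circ}(\qb)$. The plan is then to substitute $\qb\mapsto 1/\qb$ and invoke Lemma~\ref{Stanrey} on each of the $d$-dimensional cones $\mathcal{C}_i$, which yields $\sigma_{\mathcal{C}_i}(1/\qb)=(-1)^d\sigma_{\mathcal{C}_i^\circ}(\qb)$; equivalently $\sigma_{\mathcal{C}_i^\circ}(1/\qb)=(-1)^d\sigma_{\mathcal{C}_i}(\qb)$. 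Feeding this through the Brion decomposition of $\sigma_{n\mathcal{P}^\circ}(1/\qb)$ converts the interior transforms into the closed transforms $\sigma_{\mathcal{C}_i}(\qb)$ that appear on the left-hand side, up to the sign $(-1)^d$ and the factor $\qb^{-n\vb_i}$ coming from the translation, which is exactly what is needed.

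The main obstacle I expect is the careful bookkeeping of where the sign, the inversion $\qb\mapsto 1/\qb$, and the translation factor each land, together with the convergence/formal issues in summing the two half-series. Lemma~\ref{st2} already encodes the crucial cancellation: the ``$n\le 0$'' half-sum $\sum_{i}\sigma_{\mathcal{C}_i}(\qb)\sum_{n\ge 0}(\qb^{-\vb_i}t^{-1})^n$ and the ``$n\ge 1$'' half-sum cancel as rational functions. The delicate point is that each term $\sigma_{\mathcal{C}_i}(\qb)$ is only a rational function in $\qb$, so the manipulations must be read as identities in $\QQ(\qb)$ (or as identities of geometric-series expansions valid in complementary regions), not as convergent numerical sums; this is precisely the standard subtlety in Brion--Stanley arguments and is the step requiring the most care.

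Concretely, I would organize the proof as follows. Starting from the extended polynomial identity, write
\[
L_{\mathcal{P}}([-n]_{q_1},\dots,[-n]_{q_N})
=
\sum_{i=1}^m \sigma_{\mathcal{C}_i}(\qb)\cdot\qb^{-n\vb_i}.
\]
Separately, apply Brion's theorem to $n\mathcal{P}^\circ$ and Stanley reciprocity cone-by-cone to obtain
\[
\sigma_{n\mathcal{P}^\circ}\!\left(\tfrac{1}{\qb}\right)
=
\sum_{i=1}^m \qb^{-n\vb_i}\,\sigma_{\mathcal{C}_i^\circ}\!\left(\tfrac{1}{\qb}\right)
=
(-1)^d\sum_{i=1}^m \qb^{-n\vb_i}\,\sigma_{\mathcal{C}_i}(\qb).
\]
Comparing the two displays term by term gives $L_{\mathcal{P}}([-n]_{q_1},\dots,[-n]_{q_N})=(-1)^d\sigma_{n\mathcal{P}^\circ}(1/\qb)$, which is the assertion. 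The only remaining verification, handled by Lemma~\ref{st2} and the fact that $\mathcal{P}\subset\RR^N$ is genuinely $N$-dimensional in the relevant directions, is that the interior cones $\mathcal{C}_i^\circ$ in Brion's formula correspond correctly to the dilation $n\mathcal{P}^\circ$; I would justify this by the same translation argument ($\mathcal{K}_{n\vb_i}^\circ=n\vb_i+\mathcal{C}_i^\circ$) used throughout Section~\ref{sec2}.
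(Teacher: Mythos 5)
Your overall skeleton---extending the identity $L_{\mathcal{P}}([n]_{q_1},\dots,[n]_{q_N})=\sum_{i=1}^m\sigma_{\mathcal{C}_i}(\qb)\,\qb^{n\vb_i}$ to negative $n$ (the paper does this too, inside Lemma~\ref{st2}), tracking the translation factors $\qb^{n\vb_i}$, and applying Lemma~\ref{Stanrey} cone-by-cone---is sound. The gap is the step you describe as ``apply Brion's theorem (Lemma~\ref{brion}) to the interior count.'' Lemma~\ref{brion} is stated only for the \emph{closed} polytope: $\sigma_{\mathcal{P}}(\qb)=\sum_{\vb}\sigma_{\mathcal{K}_{\vb}}(\qb)$. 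The identity you actually need, $\sigma_{n\mathcal{P}^\circ}(\qb)=\sum_{i}\sigma_{\mathcal{K}_{n\vb_i}^\circ}(\qb)$, is the ``open'' variant of Brion's theorem. It is true, but it does not follow from the closed version by formal substitution (closed Brion at $1/\qb$ produces the factors $\qb^{-n\vb_i}$ attached to the \emph{closed} cone transforms $\sigma_{\mathcal{C}_i}(1/\qb)$, not to the open ones), and neither the paper nor your proposal proves it. Worse, in this context it is not an innocuous auxiliary fact: combining Theorem~\ref{mori1} (extended to negative $n$) with Lemma~\ref{Stanrey} applied to each $\mathcal{C}_i$ gives
\[
L_{\mathcal{P}}([-n]_{q_1},\dots,[-n]_{q_N})
=\sum_{i=1}^m\sigma_{\mathcal{C}_i}(\qb)\,\qb^{-n\vb_i}
=(-1)^d\sum_{i=1}^m\sigma_{\mathcal{K}_{n\vb_i}^\circ}\!\left(\frac{1}{\qb}\right),
\]
so the theorem to be proved is \emph{equivalent}, modulo results already established in the paper, to the open Brion identity you are assuming. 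As written, the argument begs the question; it becomes a proof only if you establish open Brion independently (e.g.\ via the Euler/Brianchon--Gram relation over the face lattice, or the Lawrence--Varchenko formula) or cite a reference that states it for relatively open polytopes.

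A second, smaller problem: your closing claim that Lemma~\ref{st2} handles ``the remaining verification'' is misplaced. Lemma~\ref{st2} concerns the cancellation of the two geometric-series expansions of $\sum_i\sigma_{\mathcal{C}_i}(\qb)\,\qb^{n\vb_i}t^n$ and plays no role in your term-by-term comparison; it cannot substitute for the missing open Brion statement. For contrast, the paper's proof avoids the issue entirely: it applies Stanley's reciprocity \emph{once} to the $(d+1)$-dimensional cone $\cone(\mathcal{P})$ (the passage from closed to open there is exactly what Lemma~\ref{Stanrey} provides), specializes $q_{N+1}=t$ to get $\mbe_{\mathcal{P}}(1/t;1/\qb)=(-1)^{d+1}\mbe_{\mathcal{P}^\circ}(t;\qb)$ via Remark~\ref{re2}, and only then invokes Lemma~\ref{st2} to convert the resulting series in $1/t$ into a series in positive powers of $t$, comparing coefficients. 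That series manipulation is where Lemma~\ref{st2} is genuinely needed, and it is what replaces the open Brion decomposition in your outline.
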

\begin{proof}By applying Lemma \ref{Stanrey} to the cone over $\mathcal{P}$, 
\[\sigma_{\cone (\mathcal{P})}\left(\frac{1}{q_1}, \frac{1}{q_2}, \dots ,\frac{1}{q_N}, \frac{1}{q_{N+1}}\right)
=
(-1)^{d+1}\sigma_{(\cone (\mathcal{P}))^\circ}(q_1,q_2,\dots ,q_N,q_{N+1}). 
\]
We consider the special case $q_{N+1}=t$. By Remark \ref{re2},  
\[\mbe_{\mathcal{P}}\left(\frac{1}{t};\frac{1}{\qb}\right)
=
(-1)^{d+1}\mbe_{\mathcal{P}^\circ}(t;\qb ). 
\]
Then we have 
\[1+\sum_{n=1}^\infty\sigma_{n\mathcal{P}}\left(\frac{1}{\qb }\right)\left(\frac{1}{t}\right)^n
=(-1)^{d+1}\sum_{n=1}^\infty\sigma_{n\mathcal{P}^\circ}(\qb )t^n. 
\]
By Theorem \ref{mori1}, we have 
\[1+\sum_{n=1}^\infty\sigma_{n\mathcal{P}}\left(\frac{1}{\qb }\right)\left(\frac{1}{t}\right)^n
=
\sum_{n\le 0}L_{\mathcal{P}}([-n]_{1/q_1}, [-n]_{1/q_2}, \dots , [-n]_{1/q_N})t^n.
\]
We also obtain 
\[\sum_{n\le 0}L_{\mathcal{P}}([-n]_{1/q_1},\dots , [-n]_{1/q_N})t^n
=
-\sum_{n\ge 1}L_{\mathcal{P}}([-n]_{1/q_1}, \dots ,[-n]_{1/q_N})t^n. \]
by Lemma \ref{st2}. Therefore, 
\[\sum_{n\ge 1}L_{\mathcal{P}}([-n]_{1/q_1}, [-n]_{1/q_2},\dots , [-n]_{1/q_N})t^n
=
(-1)^d\sum_{n\ge 1}\sigma_{n\mathcal{P}^\circ}(\qb )t^n.
\]
 Finally, we acquire the conclusion. 
\end{proof}

\section*{Acknowledgement}
Our heartfelt appreciation go to Mr.~Akiyoshi Tsuchiya whose comments and suggestions were of inestimable value for our study.

\end{document}